\newcommand{\colorblue}{}
\newcommand{\cov}{\operatorname{cov}}
\newcommand{\GL}{\operatorname{GL}}
\newcommand{\vv}[1]{{\mathbf#1}}
\newtheorem{thm}{Theorem} [section]
\newtheorem{lem}[thm]{Lemma}
\newtheorem{prop}[thm]{Proposition}
\newtheorem{corr}[thm]{Corollary}
\theoremstyle{definition}
\theoremstyle{remark}
\newcommand{\C}{\mathbb{C}}
\newcommand{\Q}{\mathbb{Q}}
\newcommand{\Z}{\mathbb{Z}}
\newcommand{\N}{\mathbb{N}}
\newcommand{\R}{\mathbb{R}}
\numberwithin{equation}{section}
\newcommand{\sep}{e}
\newcommand{\sepKC}{\sep(L,\mathcal{C}_n)}
\newcommand{\sepnp}{\sep_{{\rm irr}}(n,p)}
\newcommand{\Pirrn}{\mathcal{P}_{{\rm irr}}(n)}
\newcommand{\Pirrstarn}{\mathcal{P}_{{\rm irr}}^*(n)}
\newcommand{\Palln}{\mathcal{P}(n)}
\newcommand{\Pallstarn}{\mathcal{P}^*(n)}
\newcommand{\Predn}{\mathcal{P}_{{\rm red}}(n)}
\newcommand{\Predstarn}{\mathcal{P}_{{\rm red}}^*(n)}
\newcommand{\sepall}{\sep_{{\rm all}}}
\newcommand{\sepirr}{\sep_{{\rm irr}}}
\newcommand{\sepred}{\sep_{{\rm red}}}
\newcommand{\sepallstar}{\sep^*_{{\rm all}}}
\newcommand{\sepirrstar}{\sep^*_{{\rm irr}}}
\newcommand{\sepredstar}{\sep^*_{{\rm red}}}
\begin{document}
\title{$p$-adic root separation and the discriminant of integer polynomials}
\author{Victor Beresnevich \and Bethany Dixon}

\date{}\maketitle

\begin{abstract}
In this paper we investigate the following related problems:
(A) the separation of $p$-adic roots of integer polynomials of fixed degree and bounded height; and
(B) counting integer polynomials of a fixed degree and bounded height with discriminant divisible by a (large) power of a fixed prime.
One of the consequences of our findings is the existence, for all large $Q>1$, of $Q^{2/n}$ integer irreducible polynomials $P$ of degree $n$ and height $\asymp Q$ with an almost prime power discriminant of maximal size, that is $|D(P)|\asymp Q^{2n-2}$ and $D(P)=p^kC_P$ with $C_P\in\Z$ satisfying $|C_P|\ll1$.
Our method generalises techniques developed for the real case and relies on a quantitative non-divergence estimate developed by Kleinbock and Tomanov.
\end{abstract}

\noindent{\em AMS Mathematics Subject Classification}: 
11J83,
11J13,
11K55,
11K60

\noindent{\em Key words and phrases}:
Counting discriminants of polynomials,
Algebraic numbers,
Metric theory of Diophantine approximation,
Polynomial root separation

\section{Introduction}

Throughout this paper $p \in \Z$ is a prime number and $n\in\N$.
A non-zero integer polynomial $P\in\Z[X]$ will be written as
$P = a_nx^n + \cdots + a_1x +a_0$ with $a_n\neq0$, where $n=\deg P$. If $P$ is monic, $a_n = 1$. Recall that the discriminant of $P$ is defined as
\begin{equation}\label{v103}
D(P):=a_n^{2n-2}\prod_{1\le i<j\le n}(\alpha_i-\alpha_j)^2\,,
\end{equation}
where $\alpha_1,\dots,\alpha_n$ are the roots of $P$ taken with multiplicity. Throughout we will use the standard height of $P$ defined by
\begin{equation}\label{height}
 H(P) := \max \{|a_0|,\dots,|a_n|\}\,.
\end{equation}
In this paper we address the $p$-adic case of the following broad and intricate problems (see \cite[Problem~52]{Bug07}, \cite{BBG16} and \cite{alge}):

\bigskip

\noindent\textbf{Problem A:} {\em Determine how close, as a function of height, distinct roots of a (monic) integer (irreducible) polynomial of a fixed degree $n\ge2$ can be.}

\bigskip

\noindent\textbf{Problem B:} {\it Find upper and lower bounds for the number of (monic) integer (irreducible) polynomials of degree $n\ge2$, bounded height and restricted discriminant.}

\medskip

In view of \eqref{v103}
the discriminant of a polynomial encodes the separation of the roots of a polynomial. The two problems we address in this paper are thus interrelated. In fact, the approach we adopt will allow us to make progress in both at once.

Questions on root separation as well as those pertaining to the ($p$-adic or real) size of the discriminant of integer polynomials, have been investigated for many decades as they are `embedded' in a variety of problems in Diophantine approximation and Transcendental and Algebraic number theory. For instance they underpin Sprind\v zuk's celebrated proof of Mahler's conjecture \cite{Sprindzuk:book_Mahler}, and are instrumental in various results on the famous (as yet open) conjecture of Wirsing from the 1960s on approximations by algebraic numbers \cite[Problem~2]{Bug07}. Counting monic polynomials of bounded height and
degree $n$ with arithmetic restrictions imposed on their discriminant, specifically with squarefree discriminant \cite{MR4419629}, has also been instrumental in some resent work such as \cite{MR4493242} on the classical problem of counting the number fields of fixed degree and bounded discriminant \cite{Schmidt-discriminants}. We note that in the case of \cite{MR4493242,MR4419629} the height is defined as the weighted version of \eqref{height} given by $H^*(P) := \max_{1\le i\le n}|a_{n-i}|^{1/i}$ for a monic $P$.

The $p$-adic case of Problem~A deals with the separation of the roots lying in the algebraic closure of the field $\Q_p$ of $p$-adic numbers.
In turn, the $p$-adic case of Problem~B seeks counting integer polynomials of bounded height and degree $n$ whose discriminant is divisible by a (large) power of $p$. In other words, the discriminant has a relatively small $p$-adic value.

We will discuss the state of the art on these problems and our new results in sections~\ref{Sep} and \ref{discr_back}. Subsequent sections will be solely dedicated to developing the techniques and establishing the results.

\section{Root separation: past and new results}\label{Sep}

To facilitate our discussion of Problem~A, we now introduce the exponents of root separation.
Within this section, $|\cdot|$ will denote either the real or $p$-adic absolute value on $\Q$, $K$ the completion of $\Q$ with respect to this absolute value.
Thus, $K=\Q_p$ if $|\cdot|=|\cdot|_p$ is the $p$-adic absolute value, and $K=\R=\Q_\infty$ if $|\cdot|=|\cdot|_\infty$.
Given a field $L$, $\overline{L}$ will stand for its algebraic closure. Let $\mathcal{C}_n$ be an infinite subclass of polynomials in $\Z[X]$ with $\deg P=n$. Suppose that $L$ satisfies $K\subset L\subset \overline{K}$. Define the root separation exponent $\sepKC$ as the infimum of all $\sep>0$ such that for all polynomials $P\in\mathcal{C}_n$ of sufficiently large height, the inequality
\begin{equation*}
 |\alpha_1-\alpha_2| > H(P)^{-\sep}
\end{equation*}
holds for any pair of distinct roots of $P$, $\alpha_1\neq\alpha_2$, lying in $L$.
In this paper we obtain lower bounds for
$$\sepnp:=\sep(\overline{\Q_p},\Pirrn)\,,
$$
where $\Pirrn$ is the set of all irreducible integer polynomials of degree $n$.
We note that $\sepnp$ is the largest real number such that for any $\sep<\sepnp$ we can find
 infinitely many $P\in\Pirrn$ such that
\begin{equation*}
 |\alpha_1-\alpha_2|_p \le H(P)^{-\sep}
\end{equation*}
holds for some roots $\alpha_1 \neq \alpha_2\in \overline{\Q_p}$ of $P$. Observe that the fact that $P$ is irreducible (over $\Q$) means that $\alpha_1$ and $\alpha_2$ are conjugate over $\Q$.

The root separation of integer polynomials has been intensively studied in the Archimedean case, in which the most understood exponents are
$$
\sep_{\mathrm{irr}}(n):=\sep(\C,\mathcal{P}_{\mathrm{irr}}(n))\qquad\text{and}\qquad \sep^*_{\mathrm{irr}}(n):=\sep(\C,\mathcal{P}_{\mathrm{irr}}^*(n))\,,
$$
where $\Pirrstarn$ is the set of all monic integer irreducible polynomials of degree $n$, as well as their analogues for all and all reducible integer polynomials:
$$
\sepall(n):=\sep(\C,\Palln)\,,\qquad \sepall^*(n):=\sep(\C,\Pallstarn)\,
$$
$$
\sep_{\mathrm{red}}(n):=\sep(\C,\Predn)\,,\qquad \sep_{\mathrm{red}}^*(n):=\sep(\C,\Predstarn)\,.
$$
Here
$\Palln$, $\Pallstarn$, $\Predn$, $\Predstarn$ are the sets of all, all monic, all reducible and all monic reducible integer polynomials of degree $n$ respectively. Below we provide a brief summary of known bounds:
\begin{itemize}
 \item Mahler \cite{Mah64} proved that $\sepirr(n) \le n-1$.
 \item Evertse \cite{eve04} proved that $\sepall(3) = 2$. An alternative proof of this was given in \cite{sch06}.
 \item Beresnevich, Bernik and G\"otze \cite{alge} found that\newline $\min\{\sepirr(n),\sepirrstar(n+1)\} \ge (n+1)/3$. Furthermore, it was proved in \cite{alge} that $\min\{\sep(\R,\mathcal{P}_{\mathrm{irr}}(n)),\sep(\R,\mathcal{P}^*_{\mathrm{irr}}(n+1))\}\ge (n+1)/3$.
 \item Bugeaud and Mignotte \cite{bug&mig11} proved the following results regarding general and irreducible polynomials:
 \begin{itemize}
 \item $\sepirr(2) = \sepall(2)=1$;
 \item $\sepirrstar(2) = \sepallstar(2)=0$;
 \item for any even integer $n \ge 4$, $\sepall(n) \ge \sepirr(n) \ge \frac n2$;
 \item for any odd integer $n \ge 5$, $\sepall(n) \ge \frac{n + 1}{2}$ and $\sepirr(n) \ge \frac{n + 2}{4}$;
 \item $\sepirrstar(3) = \sepallstar(3) \ge 3/2$ with equality if Hall's conjecture is true;
 \item for any even integer $n \ge 4$, $\sepallstar(n) \ge n/2$ and $\sepirrstar(n) \ge \frac{n-1}{2}$;
 \item for any odd integer $n \ge 5$, $\sepallstar(n) \ge \frac{n-1}{2}$ and $\sepirrstar(n) \ge \frac{n+2}{4}$.
 \end{itemize}
 \item Bugeaud and Dujella \cite{Bug&Duj11} obtained the following:
 \begin{itemize}
 \item for any integer $n \ge 4$, $\sepirr(n) \ge n/2 + \frac{n-2}{4(n-1)}$;
 \item for any odd integer $n \ge 7$, $\sepirrstar(n) \ge n/2 + \frac{n-2}{4(n-1)} -1$.
 \end{itemize}
 \item In a subsequent paper, Bugeaud and Dujella \cite{Bug&Duj14} proved that:
 \begin{itemize}
 \item for any even positive integer $n \ge 6$, $\sepallstar(n) \ge \frac{2n-3}{3}$;
 \item for any odd positive integer $n \ge 7$, $\sepallstar(n) \ge \frac{2n-5}{3}$;
 \item for any positive integer $n \ge 4$, $\sepirrstar(n) \ge \frac{n}{2} - \frac{1}{4}$.
 \end{itemize}
 \item Later, Dujella and Pejkovi\'{c} \cite{DP17} found new bounds for reducible monic polynomials of specific degrees:
 \begin{itemize}
 \item $\sepredstar(5) \ge \frac{7}{3}$;
 \item $\sepredstar(7) \ge \frac{17}{5}$;
 \item $\sepredstar(9) \ge \frac{31}{7}$.
 \end{itemize}
 \item For arbitrary degree, Dubickas \cite{Dub20} found that $\frac{n}{2} \le \sepred(n) \le \frac{3n-2}{4}$.
\end{itemize}

However, the results in the $p$-adic case are in short supply. Indeed, we are not aware of any relevant results except for the following two papers:
\begin{itemize}
 \item[$\star$]
For $n=3$, Pejkovi\'{c} \cite{pej16} found that if $p \neq 2$, $\sepirr(3,p) \ge 25/14$;
\item[$\star$] Bugeaud \cite{Bug16} investigated a related question regarding the distance between two different algebraic numbers, with one of them having a close conjugate.
\end{itemize}

Except \cite{alge}, the rest of the findings in the real case listed above rely on finding explicit polynomials with close roots.
Whether these constructions can be generalised to the $p$-adic case remains to be seen. In this paper, we build on the approach of \cite{alge}, which also enables quantitative bounds for the number of polynomials with close roots and produces counting results for Problem~B. But first we state our main non-quantitative result.

\begin{thm}\label{thm:short}
For any $n\ge 2$ and any prime $p$, we have that
$$
\sepirr(n,p) \ge \frac{n+1}{3}\,.
$$
\end{thm}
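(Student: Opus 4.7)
My plan is to adapt the Archimedean argument of Beresnevich, Bernik and G\"otze \cite{alge} to the $p$-adic setting, with the quantitative non-divergence estimate of Kleinbock-Tomanov replacing its real analogue due to Kleinbock-Margulis. The goal is to exhibit, for arbitrarily large $Q$, an integer polynomial $P$ of degree $n$ and height $\asymp Q$ together with some $\alpha\in\Z_p$ at which both $|P(\alpha)|_p$ and $|P'(\alpha)|_p$ are as small as compatible with Diophantine theory. Writing $\sigma_1$ and $\sigma_2$ for the exponents with $|P(\alpha)|_p\le Q^{-\sigma_1}$ and $|P'(\alpha)|_p\le Q^{-\sigma_2}$, a non-divergence argument should permit $\sigma_1+\sigma_2$ to approach $n+1$, whereas the $p$-adic Hensel lemma will require $\sigma_1>2\sigma_2$ in order to produce a root $\alpha_0\in\overline{\Q_p}$ of $P$ close to $\alpha$ with $|P'(\alpha_0)|_p=|P'(\alpha)|_p$. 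Balancing these two constraints with $\sigma_1=2\sigma_2$ and $\sigma_1+\sigma_2=n+1$ yields $\sigma_2=(n+1)/3$. Since $|P'(\alpha_0)|_p$ equals $|a_n|_p$ times the product of the distances $|\alpha_0-\alpha_j|_p$ over the remaining roots $\alpha_j$ of $P$, when $a_n$ and these roots lie in bounded $p$-adic regions, at least one such distance must be at most $Q^{-(n+1)/3+o(1)}$, which is the desired root separation.

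To implement this, I would fix a compact ball $B\subset\Z_p$ and, for each large $Q$, consider the lattice of integer polynomials of degree $n$ with height at most $Q$ under the linear evaluation $P\mapsto(P(\alpha),P'(\alpha))$ for $\alpha\in B$. Applying the Kleinbock-Tomanov theorem to the corresponding parametrised family of lattices in $\Q_p^2$ would show that the exceptional set of $\alpha\in B$ at which \emph{no} integer $P$ of height at most $Q$ satisfies both $|P(\alpha)|_p\le Q^{-\sigma_1}$ and $|P'(\alpha)|_p\le Q^{-\sigma_2}$ simultaneously has measure at most $\tfrac12\mu(B)$. On the complement, the $p$-adic Hensel lemma produces a root $\alpha_0\in\overline{\Q_p}$ of $P$ close to $\alpha$ with $|P'(\alpha_0)|_p\le Q^{-(n+1)/3}$, hence a close conjugate pair as above. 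A further covering argument (each such $P$ can witness only a set of $\alpha$ of measure $\ll Q^{-(\sigma_1-\sigma_2)}$) should then yield $\gg Q^{(n+1)/3}$ distinct polynomials with the close-root property, and hence infinitely many such $P$ as $Q\to\infty$.

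The final step, replacing these by irreducible polynomials, I would carry out as in \cite{alge}: for a reducible $P=P_1P_2$ the close pair either lies inside one factor, which is handled by induction on degree, or spans the two factors, in which case a resultant estimate combined with a standard count of reducible polynomials of bounded height shows that such $P$ contribute only negligibly to the count produced above. The main obstacle I anticipate is the technical verification of the $(C,\gamma)$-good and non-planarity hypotheses of the Kleinbock-Tomanov theorem for the specific matrix family arising from the $p$-adic Taylor expansion at $\alpha$, together with the uniform control of $|a_n|_p$ and of the $p$-adic sizes of the remaining roots of $P$ that is needed to cleanly extract a close conjugate pair from the smallness of $P'(\alpha_0)$.
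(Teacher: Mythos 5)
Your proposal has the correct overall skeleton and identifies the right tools (Kleinbock--Tomanov quantitative non-divergence, $p$-adic Hensel's lemma, a covering argument, the balance $\sigma_1=2\sigma_2$, $\sigma_1+\sigma_2=n+1$ producing the exponent $(n+1)/3$), and these do match the machinery of the paper's proof, which runs through Lemma~\ref{lem:aux_lemma} $\Rightarrow$ Theorem~\ref{Thm:Main2} $\Rightarrow$ Corollaries~\ref{cor:1}--\ref{cor2}. However, there are two genuine gaps in how you plan to implement it.

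First, you propose applying the Kleinbock--Tomanov theorem to a ``parametrised family of lattices in $\Q_p^2$'' arising from the evaluation $P\mapsto(P(\alpha),P'(\alpha))$. This is not the right setup: the theorem (Theorem~\ref{thm:Original_aux}) operates on lattices of the form $h(x)\Z_S^{n+1}\subset\Q_S^{n+1}$ with $S=\{p,\infty\}$, where the $\infty$ place encodes the height restriction $H(P)\le Q$ and the $p$-adic place must carry the \emph{entire} system of Taylor coefficients $\frac{1}{i!}P^{(i)}(\alpha)$ for $i=0,\dots,n$, not just the first two. It is precisely the normalization identity $\prod_i\xi_i=Q^{-(n+1)}$, combined with the volume computation across all $n+1$ derivatives, that makes the covolume of $h(x)\Z_S^{n+1}$ equal to $1$ and lets condition (2) of Corollary~\ref{mine} be verified (Proposition~\ref{lem:specifc h map}). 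A rank-two projection to $\Q_p^2$ does not define a lattice to which the theorem applies, and discards the extra parameters that the proof actually needs.

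Second, and relatedly, to go from smallness of $|P'(\alpha_0)|_p$ to a close conjugate pair you wave your hand at ``when $a_n$ and these roots lie in bounded $p$-adic regions.'' But this is the entire content of the remaining $n-1$ constraints: the paper sets $\xi_2=\dots=\xi_n=1$ and, crucially, supplies matching \emph{lower} bounds $\delta_0\xi_i\le|\frac{1}{i!}P^{(i)}(\alpha)|_p$ in \eqref{eq:aux_lemma_statement}. These lower bounds (obtained by a second application of quantitative non-divergence with a strengthened constraint singled out, Propositions~\ref{prop:start of QND}--\ref{prop:start of alt QND}) are what force $|a_n|_p\asymp1$ and $|\alpha_0-\alpha_j|_p\asymp1$ for $j\ge3$, so that the close pair is genuinely $(\alpha_1,\alpha_2)$ (Lemma~\ref{lem:with_ordered_roots}, Lemma~\ref{lem:bounds_on_all_roots}). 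Without controlling the full derivative vector (with both upper and lower bounds), you cannot exclude the degenerate situation where the smallness of $P'$ is instead absorbed by $|a_n|_p$ or by many mildly close roots.

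Finally, your irreducibility step (bound the contribution of reducible polynomials by an induction/resultant argument) is a legitimate-sounding alternative but is \emph{not} what the paper does. In Lemma~\ref{lem:aux_lemma} the polynomials are made irreducible \emph{at construction}: Minkowski's second theorem produces $n+1$ linearly independent lattice vectors, and a congruence twist followed by Eisenstein's criterion converts them into primitive irreducible polynomials of degree exactly $n$, simultaneously fixing $H(P)\ge C_1Q$. This bakes irreducibility and the two-sided height bound into the Key Lemma and avoids any separate counting of reducible polynomials. If you pursue your alternative, you would need to justify the reducible-polynomial count in the $p$-adic setting and handle the inductive base case, neither of which is sketched.
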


\medskip

\subsection{The quantitative theory}

Our quantitative results on Problem~A that will be stated below generalise those of \cite{alge} from the real case to the $p$-adics. Given $Q\ge1$, let
\begin{equation}\label{eqn1.2}
 \mathcal{P}_n(Q) := \{
 P \in \Z[X] : \deg(P) = n\text{ and }H(P) \le Q
 \}.
\end{equation}
Let $\theta \ge 0$, $Q\ge1$ and $C_0,C_1,C_2>0$. Define the following set
\newcommand{\set}{\mathbb{A}_{n}(Q, \theta, C_0,C_1,C_2)}
$$
 \set :=\hspace*{45ex}
$$
$$
=\left\{
 \alpha \in \Z_p :
 \begin{aligned}
 &\exists \text{ irreducible $P \in \Z[X]$ with $\deg P=n$,} \\
 &\text{$P(\alpha)=0$ and $C_1Q\le H(P)\le C_2Q$}\\
 &\text{such that $\exists\;\beta\in\overline{\Q_p}$ with $P(\beta)=0$}\\ &\text{and }
 0<|\alpha-\beta|_p \le C_0Q^{-\theta}
 \end{aligned}
 \right\}.
$$

In what follows $\mu$ will denote Haar measure on $\Q_p$ normalised so that $\mu(\Z_p)=1$.

\begin{thm}\label{Thm:Main2}
Let $n \ge 2$, $p$ be a prime, $0<\kappa<1$. Then there are constants $C_0,C_1,C_2>0$ depending on $n$, $p$, and $\kappa$ only such that the following property holds true.
For any $\theta$ satisfying
 \begin{equation}\label{eq:Beta_1_bound}
 0 \le \theta \le \frac{n+1}{3},
 \end{equation}
 and any ball
 $B = B(x_0,r) := \left\{
 x \in \Z_p : |x-x_0|_p \le r
 \right\}\subset\Z_p$ we have that
\begin{equation}\label{eq:measure_of_roots_intersecting_B_is_greater_than_3/4_of_B}
 \mu\left(\bigcup_{\alpha \in \set} B(\alpha,C_0Q^{-n-1+2\theta}) \cap B\right) \ge \kappa\mu(B)
 \end{equation}
for all sufficiently large $Q$.
\end{thm}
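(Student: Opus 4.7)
The plan is to generalize the strategy of \cite{alge} from the Archimedean to the $p$-adic setting, replacing the Kleinbock--Margulis non-divergence estimate with the Kleinbock--Tomanov $p$-adic analogue on an appropriate homogeneous space of $S$-adic unimodular lattices. Fix a ball $B\subset\Z_p$.

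The first step is a $p$-adic Dirichlet/Minkowski pigeonhole: for each $x \in B$, viewing the coefficient vector of an integer polynomial $P = a_nx^n+\cdots+a_0$ of height at most $Q$ as a lattice point in $\Z^{n+1}$ and evaluating $P(x), P'(x)$ as two $\Z$-linear functionals of $(a_0,\ldots,a_n)$ with values in $\Q_p$, one obtains a non-trivial such $P$ satisfying
\[
|P(x)|_p \le c\,Q^{-n-1+3\theta}, \qquad |P'(x)|_p \le c\,Q^{-\theta}
\]
precisely when a dimension count permits, which requires $\theta \le (n+1)/3$, matching \eqref{eq:Beta_1_bound}. Second, I would apply the Kleinbock--Tomanov non-divergence estimate to the family of $S$-adic unimodular lattices (for $S = \{\infty, p\}$) obtained by combining $\R$-contractions of order $Q$ on the coefficient side and $p$-adic expansions of orders $Q^{n+1-3\theta}$, $Q^{\theta}$ on the $P(x), P'(x)$ coordinates. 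This removes a set of $x\in B$ of $\mu$-measure at most $(1-\kappa)\mu(B)$ on which the pigeonhole polynomial is degenerate, i.e.\ has height $< C_1Q$ or lies on a sublattice responsible for a cusp excursion, leaving a majority subset of $B$ on which the auxiliary $P$ is genuinely of size $Q$.

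For each remaining $x$, a $p$-adic Newton-polygon / Hensel analysis of $P$ at $x$ produces a root $\alpha \in \Z_p$ of $P$ with $|x - \alpha|_p \le C_0 Q^{-n-1+2\theta}$; Taylor-expanding $P'$ from $x$ to $\alpha$ and combining $|P'(x)|_p \le cQ^{-\theta}$ with $|P^{(k)}(\alpha)|_p\ll1$ gives $|P'(\alpha)|_p \le c'Q^{-\theta}$, and the factorisation $P'(\alpha) = a_n\prod_{\beta\neq\alpha}(\alpha-\beta)$ then forces the existence of a conjugate root $\beta$ with $|\alpha-\beta|_p\le C_0Q^{-\theta}$. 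To secure $\Q$-irreducibility of $P$ with $\deg P = n$, I would further discard a small set of $x$ on which $P$ admits a non-trivial $\Z$-factorisation, estimated directly by a separate enumeration of integer polynomials of smaller degree whose roots cluster near $x$. A covering argument with bounded multiplicity then assembles the balls $B(\alpha, C_0Q^{-n-1+2\theta})$ into the measure bound \eqref{eq:measure_of_roots_intersecting_B_is_greater_than_3/4_of_B}.

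The two main obstacles I anticipate are (i) formulating the Kleinbock--Tomanov input --- the $S$-adic lattice, the one-parameter subgroup, and the verification of the $(C,\alpha)$-good property for the polynomial functionals $P \mapsto (P(x),P'(x))$ --- so that the non-divergence output accommodates the full range $\theta\le(n+1)/3$; and (ii) handling $\Q$-irreducibility cleanly, since $\Q_p$-local and $\Q$-global factorisation no longer coincide, so the real-case dichotomy arguments require substantial modification in the $p$-adic setting.
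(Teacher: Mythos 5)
Your proposal has the same high-level architecture as the paper's proof: a Minkowski/pigeonhole step to produce an auxiliary polynomial $P$ at each $x$, a Kleinbock--Tomanov non-divergence estimate on $\GL(n+1,\Q_S)$, $S=\{p,\infty\}$, to remove a small exceptional set, and a Hensel/Newton-polygon step to locate a root $\alpha\in\Z_p$ near $x$ together with a close conjugate $\beta$. The paper packages the first two steps into the Key Lemma (Lemma~\ref{lem:aux_lemma}) so that the proof of Theorem~\ref{Thm:Main2} reduces to choosing the parameters $\xi_i$ and invoking Corollary~\ref{lem:bounds_on_first_two_xis} and Lemma~\ref{lem:bounds_on_all_roots}. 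Two points in your plan are genuine gaps rather than organisational differences.

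\textbf{Lower bounds on derivatives.} You impose only the upper bounds $|P(x)|_p\ll Q^{-n-1+3\theta}$ and $|P'(x)|_p\ll Q^{-\theta}$ and use non-divergence solely to enforce $H(P)\gg Q$. But the Hensel step needs $|P'(x)|_p\gg Q^{-\theta}$ (not just $\ll$) to conclude $|x-\alpha|_p\le|P(x)|_p/|P'(x)|_p\ll Q^{-n-1+2\theta}$, and your factorisation $P'(\alpha)=a_n\prod_{\beta\ne\alpha}(\alpha-\beta)$ only forces a single factor to be as small as $Q^{-\theta}$ if the remaining $n-2$ factors are bounded \emph{below}, which is not automatic: without further information it merely gives $\min_\beta|\alpha-\beta|_p\ll Q^{-\theta/(n-1)}$. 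The paper's Lemma~\ref{lem:aux_lemma} precisely supplies what is missing: two-sided bounds $\delta_0\xi_i\le|P^{(i)}(x)/i!|_p\le\xi_i$ for \emph{every} $i$, obtained by running the Kleinbock--Tomanov estimate once per index $i'$ with the rescaling $\delta_i$ of \eqref{eq:delta_def} (Proposition~\ref{prop:start of QND}) and once more for the height (Proposition~\ref{prop:start of alt QND}); these feed into Lemma~\ref{lem:bounds_on_all_roots}, which yields $|x-\alpha_2|_p\le\delta_0^{-2}Q^{-\theta}$ directly, so the ultrametric inequality gives $|\alpha_1-\alpha_2|_p\le\delta_0^{-2}Q^{-\theta}$ with no detour through $P'(\alpha)$. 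Relatedly, working with only the two functionals $P(x),P'(x)$ does not naturally produce a square unimodular matrix over $\Q_S$; the paper's $h_1(x)$ in \eqref{h1_Matrix} encodes all $n+1$ scaled derivative evaluations precisely so that conditions (1)--(2) of Corollary~\ref{mine} can be verified. (As a side remark, with your stated exponents $Q^{-n-1+3\theta}$ and $Q^{-\theta}$ the Hensel hypothesis $|P(x)|_p<|P'(x)|_p^2$ would force $\theta<(n+1)/5$; the correct choice, as in the paper, is $\xi_0\asymp Q^{-n-1+\theta}$, $\xi_1\asymp Q^{-\theta}$, giving the full range $\theta<(n+1)/3$ via \eqref{eq:Condition on b_1 and b_2}.)

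\textbf{Irreducibility.} Your plan to discard a measure-small set of $x$ where $P$ factors over $\Z$, estimated by an auxiliary count of lower-degree polynomials near $x$, is not how the paper proceeds, and as you note yourself it is unclear that this set is small. The paper instead produces irreducibility constructively and pointwise: once the cusp set $E(B;\varepsilon_0)$ of \eqref{vb6.4} is removed, the second Minkowski theorem gives $n+1$ linearly independent short lattice vectors; Bertrand's postulate supplies a prime $q\ne p$ with $m<q<4m$ where $m=[\Gamma:\Lambda]$, and the congruence systems \eqref{eq:firstcongruence}--\eqref{eq:secondcongruence} produce $\Z$-combinations $\widetilde P_l$ of the Minkowski polynomials that are Eisenstein at $q$, hence of exact degree $n$ and irreducible over $\Q$, with heights still $\asymp Q$. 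This sidesteps entirely the local-versus-global factorisation issues you flag as obstacle (ii).
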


\medskip

\begin{corr}\label{cor:1}
Let $n \ge 2$, $p$ be a prime, $0<\kappa<1$. Then there are constants $C_0,C_1,C_2>0$ depending on $n$, $p$, and $\kappa$ only such that for any $\theta$ satisfying \eqref{eq:Beta_1_bound} and any ball $B\subset\Z_p$ \begin{equation}\label{eq:corr1}
 \# (\set \cap B) \ge \frac{\kappa}{p C_0} \cdot Q^{n+1-2\theta}\mu(B)
 \end{equation}
 for all sufficiently large $Q$.
\end{corr}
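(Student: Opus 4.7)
The plan is to derive the counting bound directly from the measure bound of Theorem~\ref{Thm:Main2} via a standard covering argument, exploiting the ultrametric structure of $\Q_p$.

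First, I would apply Theorem~\ref{Thm:Main2} with the given $\kappa$, $n$, $p$ to obtain constants $C_0,C_1,C_2>0$ so that, writing $r:=C_0Q^{-n-1+2\theta}$,
\[
\kappa\,\mu(B)\ \le\ \mu\!\left(\bigcup_{\alpha\in\set}B(\alpha,r)\cap B\right)
\]
for all sufficiently large $Q$. For $Q$ large enough one has $r<\mathrm{rad}(B)$. Since in $\Q_p$ any two balls are either disjoint or one contains the other, any ball $B(\alpha,r)$ meeting $B$ with $r<\mathrm{rad}(B)$ must be contained in $B$, whence $\alpha\in B$. Consequently only $\alpha\in\set\cap B$ can contribute to the union above, and
\[
\kappa\,\mu(B)\ \le\ \mu\!\left(\bigcup_{\alpha\in\set\cap B}B(\alpha,r)\right)\ \le\ \sum_{\alpha\in\set\cap B}\mu(B(\alpha,r))
\]
by countable subadditivity.

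Now each summand is bounded by the $p$-adic measure of a ball of radius $r$, namely the largest power of $p$ not exceeding $r$; this gives the elementary estimate $\mu(B(\alpha,r))\le r\le pr$. Substituting yields
\[
\kappa\,\mu(B)\ \le\ \#(\set\cap B)\cdot pr\ =\ p\,C_0\,Q^{-n-1+2\theta}\cdot\#(\set\cap B),
\]
which rearranges to the claimed inequality $\#(\set\cap B)\ge\frac{\kappa}{pC_0}Q^{n+1-2\theta}\mu(B)$.

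There is no real obstacle here; the argument is a bookkeeping deduction from Theorem~\ref{Thm:Main2}. The only place requiring a little care is the observation that the ultrametric property forces every $\alpha$ with $B(\alpha,r)\cap B\ne\emptyset$ to lie inside $B$ once $Q$ is large, so that the counting over $\set\cap B$ captures the full contribution to the union.
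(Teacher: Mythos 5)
Your argument is correct and follows the same route as the paper: apply Theorem~\ref{Thm:Main2}, use countable subadditivity of $\mu$, bound each $p$-adic ball by $pC_0Q^{-n-1+2\theta}$, and rearrange. The paper compresses this to ``a standard covering argument using the subadditivity of $\mu$,'' whereas you helpfully spell out the ultrametric observation that once $C_0Q^{-n-1+2\theta}$ is smaller than the radius of $B$, any $\alpha$ whose ball meets $B$ must itself lie in $B$, which is precisely the step needed to pass from a union over $\set$ to a count of $\set\cap B$.
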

\begin{proof}
By a standard covering argument using the subadditivity of $\mu$, we have that
\begin{equation*}
\begin{aligned}
 &\# (\set \cap B) \cdot p C_0 Q^{-n-1+2\theta}\\
 &\ge
 \mu \left(
 \bigcup_{\alpha \in \set}
 B(\alpha, C_0Q^{-n-1+2\theta}) \cap B
 \right)
 \\
 &\ge \kappa\mu(B)
 \end{aligned}
 \end{equation*}
 where the final line comes about by \eqref{eq:measure_of_roots_intersecting_B_is_greater_than_3/4_of_B}. Now \eqref{eq:corr1} follows immediately.
\end{proof}

\begin{corr}\label{cor2}
Let $n \ge 2$. Then for all sufficiently large $Q$ there are $\gg Q^{\frac{n+1}{3}}$ $p$-adic algebraic numbers $\alpha\in\Z_p$ of degree $n$ and height $H(\alpha) \asymp Q$ such that
 \begin{equation}
 0<|\alpha-\beta|_p \;\ll\; Q^{-\frac{n+1}{3}}\quad\text{for some $\beta\in \overline{\Q_p}$ conjugate to $\alpha$\,,}
 \end{equation}
where the implied constants depend on $n$ and $p$ only.
\end{corr}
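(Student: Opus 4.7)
The plan is to deduce Corollary~\ref{cor2} as a direct consequence of Corollary~\ref{cor:1} by pushing the parameter $\theta$ to its extremal value. Concretely, I would fix any $\kappa\in(0,1)$ (say $\kappa=1/2$), set $\theta=\tfrac{n+1}{3}$, which is the upper endpoint permitted by \eqref{eq:Beta_1_bound}, and take $B=\Z_p$, so that $\mu(B)=1$. With the constants $C_0,C_1,C_2$ furnished by Corollary~\ref{cor:1} (which depend only on $n$, $p$, and $\kappa$), that corollary immediately yields
\[
\#\bigl(\mathbb{A}_n(Q,\tfrac{n+1}{3},C_0,C_1,C_2)\cap\Z_p\bigr)\;\ge\;\frac{\kappa}{pC_0}\,Q^{\,n+1-2(n+1)/3}\;=\;\frac{\kappa}{pC_0}\,Q^{(n+1)/3}
\]
for all sufficiently large $Q$, which matches the required lower bound on the cardinality.

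It then remains to translate membership in $\mathbb{A}_n(Q,\tfrac{n+1}{3},C_0,C_1,C_2)$ into the properties listed in Corollary~\ref{cor2}. By definition of this set, each such $\alpha\in\Z_p$ is a root of some irreducible $P\in\Z[x]$ of degree $n$ with $C_1Q\le H(P)\le C_2Q$; hence $\alpha$ is algebraic over $\Q$ of degree exactly $n$, the polynomial $P$ is (up to sign) its minimal polynomial in $\Z[x]$, and consequently $H(\alpha)=H(P)\asymp Q$. The definition further guarantees a $\beta\in\overline{\Q_p}$ with $P(\beta)=0$ and $0<|\alpha-\beta|_p\le C_0Q^{-(n+1)/3}$; because $P$ is irreducible over $\Q$, any two of its roots are $\Q$-conjugate, so this $\beta$ is indeed a conjugate of $\alpha$ distinct from $\alpha$.

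There is no substantive obstacle here: all of the difficulty has been absorbed into Theorem~\ref{Thm:Main2} and its counting reformulation Corollary~\ref{cor:1}, from which Corollary~\ref{cor2} follows as a direct formal consequence. The only minor point worth noting is that the upper endpoint $\theta=(n+1)/3$ is actually allowed in \eqref{eq:Beta_1_bound}, since the inequality there is non-strict; this is what permits the exponent $(n+1)/3$ (rather than $(n+1)/3-\varepsilon$) to appear both in the cardinality bound and in the separation estimate.
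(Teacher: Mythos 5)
Your proposal is correct and follows exactly the paper's own one-line proof: apply Corollary~\ref{cor:1} with $\theta=(n+1)/3$, $\kappa=1/2$ (any fixed $\kappa\in(0,1)$ works), and $B=\Z_p$. The additional remarks you make about unpacking the definition of $\mathbb{A}_n$ are correct and merely spell out what the paper leaves implicit.
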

\begin{proof}
 This follows from Corollary \ref{cor:1} by taking $\theta = (n+1)/3$, $\kappa=1/2$ and $B=\Z_p$.
\end{proof}

Here and elsewhere $A\ll B$ means that $A\le CB$ for some $C>0$, which is referred to as the implied constant. We will also use the notation $A\asymp B$ which means $A\ll B\ll A$.

\medskip

\begin{proof}[Proof of Theorem \ref{thm:short}]
This follows immediately from Corollary \ref{cor2}.
\end{proof}

\section{Counting discriminants: past and new results}\label{discr_back}

As before, $n\ge2$, $Q>1$ and $\mathcal{P}_n(Q)$ is given by \eqref{eqn1.2}. It is well known that, for a polynomial $P$ of degree $n$, $D(P)$ is an integer polynomial of degree $2n-2$ in the coefficients of $P$, e.g. see \cite{BBG16}. This means that for every $P\in\Z[X]$ with $\deg P=n$, $D(P)\in\Z$ and
\begin{equation}\label{vb2.2}
|D(P)| \ll H(P)^{2n -2}\,,
\end{equation}
where the implied constant depends on $n$ only. Also, if $P$ does not have repeated roots then $|D(P)|\ge1$.
In particular, for any $P\in\Z[X]$ with $\deg P=n$ without repeated roots \begin{equation}\label{vb2.3}
H(P)^{-2(n -1)}\ll |D(P)|_p \le1\,.
\end{equation}
Therefore, in the context of Problem~B, one considers the following sets for $\nu\in [0, n-1]$\,:
\begin{align*}
\mathcal{D}_{n,\infty}(Q,\nu)&:=\big\{P \in \mathcal{P}_n(Q) : 1\le |D(P)| \ll Q^{2n-2-2\nu}\big\}\,,\\[0.5ex]
\mathcal{D}_{n,p}(Q,\nu)&:=\big\{P \in \mathcal{P}_n(Q) : 0<|D(P)|_p \ll Q^{-2\nu}\big\}\,,
\end{align*}
where the implied constants depend on $n$ and $p$ only.
For $v=\infty$ and $v=p$
we also define
\begin{align*}
\mathcal{D}^{\rm irr}_{n,v}(Q,\nu)& := \mathcal{D}_{n,v}(Q,\nu) \cap \big\{\text{$P$ is irreducible over } \Q\big\}\,.
\end{align*}

\subsection{Previous results}

We begin with a survey of known results for $v=\infty$. The first explicit bound on $\#\mathcal{D}_{n,\infty}(Q,\nu)$ was established by Bernik, G\"otze and Kukso \cite[Theorem~1]{BGO08}, who showed that
\begin{equation}\label{vb3.3}
\#\mathcal{D}_{n,\infty}(Q,\nu) \gg Q^{n+1-2\nu}\qquad\text{for }\nu \in [0,\tfrac{1}{2}]\,.
\end{equation}
This was later extended in \cite{MR2808979} to $\nu\in[0,(n-2)/3]$.

Using counting results on rational points near curves \cite{MR2373145,MR2242634}
it was shown in \cite{BBG16} that
\begin{equation*}
\#\mathcal{D}_{2,\infty}(Q,\nu) \asymp Q^{3-2\nu}\qquad\text{for all }\nu \in [0,1)\,.
\end{equation*}
In particular, this means that \eqref{vb3.3} is sharp for $n=2$. Furthermore, an asymptotic formula for $\#\mathcal{D}_{2,\infty}(Q,\nu)$ was obtained in \cite{GKK13} for $0\le\nu<\tfrac34$. However, for $n\ge3$, \eqref{vb3.3} turned out to be far from the truth. Indeed, G\"otze, Kaliada and Kukso \cite{GKK14} proved that $$
\#\mathcal{D}_{3,\infty}(Q,\nu) \asymp Q^{4- \frac{5}{3}\nu}
$$
for $0 \le \nu < \frac{3}{5}$, and they also established an asymptotic formula.
For any $n\ge2$, Beresnevich, Bernik and G\"otze \cite{BBG16} obtained the following lower bound for all $0\le \nu\le n-1$:
\begin{equation}\label{BBG16bound}
\#\mathcal{D}_{n,\infty}(Q,\nu) \gg Q^{n+1-\frac{n+2}{n}\nu}\,.
\end{equation}
This is believed to be optimal. Recently, Badziahin \cite[Theorem~4]{Badz} completed the story for the cubic case ($n=3$) by showing that for any $\nu\in[0,2]$ and $\varepsilon>0$
\begin{equation}\label{Badz}
\#\mathcal{D}_{3,\infty}(Q,\nu) \ll Q^{4- \frac{5}{3}\nu+\varepsilon}
\end{equation}
for sufficiently large $Q$.
No other generic upper bounds for $\#\mathcal{D}_{n,\infty}(Q,\nu)$ are known; however there are several results with additional constraints on the distribution of roots \cite{BBG17,BBO15, BB0'D2020, bud19}.

Now we turn to the $p$-adic case, in which little is known. Bernik, G\"otze and Kukso \cite{BGK08} proved that
$$
\#\mathcal{D}_{n,p}(Q,\nu) \gg Q^{n+1-2\nu}\qquad\text{for $0\le \nu \le \tfrac{1}{2}$}\,,
$$
which is analogous to \eqref{vb3.3}.
Very recently, generalising \eqref{Badz}, Bernik, Vasilyev, Kudin and Panteleeva \cite[Theorem~4]{BVKP24} gave the following upper bound for $n=3$:
\begin{equation}\label{BVKP24}
\#\mathcal{D}_{3,p}(Q,\nu) \ll Q^{4-\frac53\nu+\varepsilon}\qquad\text{for $0\le \nu \le 2$}\,.
\end{equation}

\subsection{New results}

In this paper we establish the following lower bound generalising
the main result of Beresnevich, Bernik and G\"otze \cite{smallDisciminants} to the $p$-adic case:

\begin{thm}\label{Thm:Main}
Let $n\ge 2$ be an integer, $p$ be a prime. Then, for any $0\le\nu\le n-1$
 \begin{equation}
 \# \left(\mathcal{D}^{\rm irr}_{n,p}(Q,\nu)\cap\big\{P\in\Z[X]: H(P)\asymp Q\big\}\right) \;\gg\; Q^{n+1 - \frac{n+2}{n}\nu}
 \end{equation}
for all sufficiently large $Q$, where all implied constants depend on $n$ and $p$ only.
\end{thm}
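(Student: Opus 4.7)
My plan is to follow the strategy of Beresnevich, Bernik and G\"otze \cite{smallDisciminants} adapted to the $p$-adic setting, with Corollary~\ref{cor:1} as the main quantitative input.

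I would begin with the simplest case. Applying Corollary~\ref{cor:1} with $\theta=\nu$ (valid when $\nu\le(n+1)/3$) produces $\gg Q^{n+1-2\nu}$ algebraic numbers $\alpha\in\Z_p$, each a root of an irreducible $P_\alpha\in\Z[x]$ of degree $n$ and height $\asymp Q$, together with a $p$-adic conjugate $\beta$ satisfying $|\alpha-\beta|_p\le C_0 Q^{-\nu}$. Since every root of $P_\alpha$ lies in $\overline{\Z_p}$, the product formula \eqref{v103} immediately gives
\begin{equation*}
|D(P_\alpha)|_p\;\le\;|\alpha-\beta|_p^{2}\;\le\;C_0^{2}\,Q^{-2\nu}.
\end{equation*}
Passing from algebraic numbers to distinct polynomials (at most $n$ $\Z_p$-roots per polynomial) yields $\gg Q^{n+1-2\nu}$ irreducible polynomials with $|D(P)|_p\le Q^{-2\nu}$, which already matches the claim in the case $n=2$ where $(n+2)/n=2$.

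For $n\ge 3$ the target exponent $n+1-\frac{n+2}{n}\nu$ is strictly larger than $n+1-2\nu$, so one must produce additional polynomials per base $P_\alpha$. I would attach to each $P_\alpha$ (produced by Corollary~\ref{cor:1} at a carefully chosen $\theta$) a lattice of integer perturbations $\tilde P(x)=P_\alpha(x)+p^{j}L(x)$, with $L\in\Z[x]$ of degree $<n$ and $H(L)\ll Q/p^{j}$, and $j$ calibrated just above the Hensel threshold $p^{j}>|P_\alpha'(\alpha)|_p^{-2}$. Hensel's lemma then produces roots $\tilde\alpha,\tilde\beta\in\overline{\Z_p}$ of $\tilde P$ close to $\alpha$ and $\beta$, so that $|\tilde\alpha-\tilde\beta|_p\le Q^{-\theta}$ and $|D(\tilde P)|_p\ll Q^{-2\theta}$. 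Optimising the parameters $\theta$ and $j$ under the constraints $|D(\tilde P)|_p\le Q^{-2\nu}$ and $H(\tilde P)\asymp Q$ should deliver the stated exponent.

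The most delicate step I anticipate is the overcounting analysis, since a single $\tilde P$ may lie in the perturbation family of several base polynomials. To handle this I would first extract a maximal disjoint subfamily of the balls $B(\alpha,C_0Q^{-n-1+2\theta})$ appearing in Theorem~\ref{Thm:Main2} via a Vitali-type selection, and count perturbations only inside the chosen balls. Two supporting steps are (a) ensuring that a positive proportion of the $\tilde P$'s remain irreducible over $\Q$, which follows from the classical fact that reducible integer polynomials in $\{H(P)\ll Q\}$ form a sparse subset; and (b) verifying Hensel's lemma near its critical threshold, where the close-pair structure of $P_\alpha$ forces $|P_\alpha'(\alpha)|_p$ to be small and makes the Hensel inequality tight.
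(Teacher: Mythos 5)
The paper's proof does not go through Corollary~\ref{cor:1} at all: it applies Lemma~\ref{lem:aux_lemma} directly with a carefully chosen decreasing sequence $\xi_0,\dots,\xi_n$, which via Lemma~\ref{lem:bounds_on_all_roots} forces \emph{every} root $\alpha_j$ to lie within $\delta_0^{-1}Q^{-d_j}$ of the sampling point $x$. The exponent $\nu=\sum_{j=1}^n(j-1)d_j$ then arises because the discriminant is a product over \emph{all} pairs of roots, and the count $\gg Q^{d_1}$ follows from a covering argument on the set $G_B$. The improvement $n+1-\frac{n+2}{n}\nu$ over the naive $n+1-2\nu$ is obtained precisely by distributing the clustering across $d_2=\dots=d_n$ rather than concentrating it in a single close pair. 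Your proposal discards this structure at the outset by passing to Corollary~\ref{cor:1}, which remembers only one close pair.

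Two concrete gaps follow from this. First, your bound $|D(P_\alpha)|_p\le|\alpha-\beta|_p^{2}$ is not justified: for a non-monic integer polynomial, roots need not lie in $\overline{\Z_p}$ (consider $px^2-px+1$), so the remaining factors $|a_n|_p^{2n-2}\prod_{(i,j)\ne(1,2)}|\alpha_i-\alpha_j|_p^2$ are not automatically $\le1$. Controlling them requires precisely the bounds on all $|\tfrac1{i!}P^{(i)}(x)|_p$ delivered by the Key Lemma, which Corollary~\ref{cor:1} has already thrown away. Second, and more structurally, your perturbation machinery for $n\ge3$ faces issues that the paper never needs to confront: the discriminant of $\tilde P=P_\alpha+p^jL$ is not controlled by that of $P_\alpha$ without a separate argument bounding how Hensel lifts move the full root configuration; the Eisenstein-type irreducibility used in the paper (built into Lemma~\ref{lem:aux_lemma}) is not guaranteed for the perturbed family by a generic sparsity argument; and the overcounting analysis you flag is genuinely hard because a single $\tilde P$ may lie near many base points $\alpha$ with no a priori separation. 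The paper sidesteps all of this by deriving the count in one step from the measure bound $\mu(G_B)\ge\tfrac12\mu(B)$ and the covering $G_B\subset\bigcup_{P,j}B(\alpha_j(P),\delta_0^{-1}Q^{-d_1})$. I would recommend returning to Lemma~\ref{lem:aux_lemma} and Lemma~\ref{lem:bounds_on_all_roots} and choosing the exponents $\theta_i$ (with $\theta_n=0$ and $\sum\theta_i=n+1$) so that all $d_j$ for $j\ge2$ are equal; solving $\sum jd_j=n+1$ and $\sum(j-1)d_j=\nu$ then yields $d_1=n+1-\frac{n+2}{n}\nu$ directly.
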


\smallskip

\begin{corr}[Almost prime power discriminants]
For any $n\ge2$ and sufficiently large $Q$ there are $\gg Q^{2/n}$ integer irreducible polynomials $P$ of degree $n$ and height $H(P)\asymp Q$ such that for some $k=k(P)\in\N$ and $C=C(P)\in\Z$ we have that
$$
|D(P)|\asymp Q^{2n-2}\,,\quad D(P)=p^kC\quad\text{and}\quad|C|\ll1\,,
$$
where the implied constants depend on $n$ and $p$ only.
\end{corr}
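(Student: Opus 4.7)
The plan is to derive this corollary directly from Theorem \ref{Thm:Main} by taking $\nu$ at the extreme value $\nu=n-1$. Substituting into the exponent of Theorem \ref{Thm:Main},
$$ n+1 - \frac{n+2}{n}(n-1) \;=\; \frac{n(n+1)-(n+2)(n-1)}{n} \;=\; \frac{2}{n}, $$
so the theorem already provides $\gg Q^{2/n}$ irreducible $P\in\Z[x]$ of degree $n$ with $H(P)\asymp Q$ and $|D(P)|_p \ll Q^{-2(n-1)}$. These will be precisely the polynomials in the corollary; it only remains to read off the two assertions about $D(P)$ from the hypotheses that are already in hand.

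For each such $P$, I would write $D(P)=p^k C$ with $k\in\Z_{\ge 0}$ chosen maximal, so that $\gcd(C,p)=1$ and $|D(P)|_p=p^{-k}$. Irreducibility of $P$ over $\Q$ forces $D(P)\ne 0$, whence $C$ is a nonzero integer and $|C|\ge 1$. The $p$-adic upper bound therefore gives
$$ p^k \;=\; |D(P)|_p^{-1} \;\gg\; Q^{2(n-1)}, $$
which, combined with the universal archimedean bound $|D(P)|\ll H(P)^{2n-2}\ll Q^{2n-2}$ from \eqref{vb2.2}, yields simultaneously
$$|D(P)| \;=\; p^k|C| \;\ge\; p^k \;\gg\; Q^{2n-2} \qquad\text{and}\qquad |C| \;=\; \frac{|D(P)|}{p^k} \;\ll\; \frac{Q^{2n-2}}{Q^{2(n-1)}} \;=\; 1.$$
Together with \eqref{vb2.2} this produces $|D(P)|\asymp Q^{2n-2}$ and $|C|\ll 1$. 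For all sufficiently large $Q$ one has $p^k>1$, so $k\in\N$, as required.

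There is no real obstacle in this deduction; the whole substance of the corollary is packaged inside Theorem \ref{Thm:Main}. The conceptual point worth highlighting is that $\nu=n-1$ is exactly the threshold at which the trivial lower bound \eqref{vb2.3} on $|D(P)|_p$ is saturated (up to constants), and at this threshold the product $|D(P)|\cdot|D(P)|_p$ is automatically forced to be $O(1)$ by \eqref{vb2.2}. That single observation simultaneously supplies both the `almost prime power' shape of $D(P)$ and the maximality of its archimedean size, so the corollary follows with no further work.
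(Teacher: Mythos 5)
Your derivation is correct and is precisely the intended deduction from Theorem~\ref{Thm:Main} (the paper states the corollary without proof, and this is clearly what the authors have in mind). Taking $\nu=n-1$ gives the exponent $2/n$, and the pinching argument — $p^k\gg Q^{2n-2}$ from the $p$-adic bound versus $|D(P)|\ll Q^{2n-2}$ from \eqref{vb2.2} — correctly forces both $|D(P)|\asymp Q^{2n-2}$ and $|C|\ll1$, with $k\ge1$ for large $Q$.
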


\medskip

Combining Theorem~\ref{Thm:Main}
with \eqref{BVKP24} we get the following

\medskip

\begin{corr}[The cubic case]
Let $n=3$, $p$ be any prime. Then, for any $0\le\nu\le 2$ and any $\varepsilon>0$, for all sufficiently large $Q$ we have that
 \begin{equation}
 1\;\ll\;\# \mathcal{D}_{3,p}(Q,\nu) \cdot Q^{-(4 - \frac{5}{3}\nu)}\;\ll\; Q^{\varepsilon}\,,
 \end{equation}
where all implied constants depend on $n$ and $p$ only.
\end{corr}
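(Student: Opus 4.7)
The plan is to deduce this corollary simply by combining Theorem~\ref{Thm:Main}, specialised to $n=3$, with the upper bound \eqref{BVKP24} of Bernik, Vasilyev, Kudin and Panteleeva; essentially no new work is required, and there is no substantive obstacle to overcome. The point of the statement is really that the new lower bound of Theorem~\ref{Thm:Main} matches the exponent in \eqref{BVKP24}, so the counting function $\#\mathcal{D}_{3,p}(Q,\nu)$ is determined up to a factor of $Q^{\varepsilon}$ over the full range $0\le\nu\le 2$.

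For the lower bound, I would apply Theorem~\ref{Thm:Main} with $n=3$, which yields
$$
\#\bigl(\mathcal{D}^{\rm irr}_{3,p}(Q,\nu)\cap\{P\in\Z[x]:H(P)\asymp Q\}\bigr)\;\gg\; Q^{4-\tfrac{5}{3}\nu}.
$$
Since the set on the left is contained in $\mathcal{D}_{3,p}(Q,\nu)$ (irreducibility is an extra constraint, and the condition $H(P)\asymp Q$ only restricts further), we immediately obtain $\#\mathcal{D}_{3,p}(Q,\nu)\gg Q^{4-\tfrac{5}{3}\nu}$. The only tiny technicality is that Theorem~\ref{Thm:Main} provides polynomials with $C_1Q\le H(P)\le C_2Q$ while $\mathcal{P}_3(Q)$ requires $H(P)\le Q$; this is handled by applying Theorem~\ref{Thm:Main} with $Q/C_2$ in place of $Q$, which only alters the implied constant.

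For the upper bound, I would simply cite \eqref{BVKP24}, which, for the same range of $\nu$, gives $\#\mathcal{D}_{3,p}(Q,\nu)\ll Q^{4-\tfrac{5}{3}\nu+\varepsilon}$ for any $\varepsilon>0$ and all sufficiently large $Q$. Combining the two bounds produces
$$
1\;\ll\;\#\mathcal{D}_{3,p}(Q,\nu)\cdot Q^{-(4-\tfrac{5}{3}\nu)}\;\ll\; Q^{\varepsilon},
$$
with implied constants depending on $p$ (and $\varepsilon$ for the upper bound). The hardest part of the proof has therefore been subsumed into Theorem~\ref{Thm:Main}; once that is in hand, this corollary is a one-line consequence.
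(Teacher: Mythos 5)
Your proposal is correct and uses exactly the same argument as the paper, which simply states that the corollary follows by combining Theorem~\ref{Thm:Main} (for the lower bound) with the upper bound \eqref{BVKP24} of Bernik, Vasilyev, Kudin and Panteleeva. Your extra remark about rescaling $Q$ to handle the height normalisation is a sensible, if minor, clarification that the paper leaves implicit.
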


\subsection{Further remarks}

In this subsection, we present a general problem that extends the questions we have discussed above to the case of several primes. Let $S$ be a non-empty finite set that contains only prime numbers and $\infty$. Let $\boldsymbol{\nu}_S=(\nu_v)_{v\in S}$ be a vector of non-negative reals. Define
$$
\mathcal{D}_{n,S}(Q,\boldsymbol{\nu}_S):=\bigcap_{v\in S}\mathcal{D}_{n,v}(Q,\nu_v)\qquad\text{
and}\qquad
\mathcal{D}^{\rm irr}_{n,S}(Q,\boldsymbol{\nu}_S):=\bigcap_{v\in S}\mathcal{D}^{\rm irr}_{n,v}(Q,\nu_v)\,.
$$

\medskip

\noindent\textbf{Main Problem:} {\em With $\mathcal{D}^\circ_{n,S}(Q,\boldsymbol{\nu}_S)$ standing for either
$\mathcal{D}_{n,S}(Q,\boldsymbol{\nu}_S)$ or $\mathcal{D}^{\rm irr}_{n,S}(Q,\boldsymbol{\nu}_S)$,
verify for any $n\ge2$ and $S$ and $\boldsymbol{\nu}_S$ as above such that
$$
\nu:=\sum_{v\in S}\nu_v\le n-1
$$
for any $\varepsilon>0$ and all sufficiently large $Q$
$$
Q^{n+1-\frac{n+2}n\nu}\;\;\ll\;\; \#\mathcal{D}^\circ_{n,S}(Q,\boldsymbol{\nu}_S) \;\;\ll\;\; Q^{n+1-\frac{n+2}n\nu+\varepsilon}\,.
$$
}

\bigskip

Little is known about the general case for $\#S\ge2$. However, Bernik, Budarina and O'Donnell \cite{BBO18} established that when $n=3$ and $S=\{\infty,p\}$, for any $\varepsilon>0$ we have that
$$
\#\mathcal{D}_{3,S}(Q,\boldsymbol{\nu}_S) \ll Q^{4-\frac{5}{3}(\nu_\infty+\nu_p)+\varepsilon}
$$
holds for all sufficiently large $Q$ if $\frac{3\varepsilon}{20}\le \nu_\infty+\nu_p \le \frac{6}{5}$.
In turn, Budarina, Dickinson and Yuan \cite{BDY12} verified that if $n\ge3$, $S=\{\infty,p\}$ and $\boldsymbol{\nu}=(\nu,\nu)$, that is $\nu_\infty=\nu_p=\nu$, then
$$
\#\mathcal{D}_{n,S}(Q,\boldsymbol{\nu}) \gg Q^{n+1-4\nu}\qquad \text{for $0\le \nu \le \tfrac{1}{3}$.}
$$

\section{Key Lemma on Polynomials}\label{sec2}

{\colorblue{}We begin by stating the following lemma, which is instrumental in establishing the main results of this paper. It allows us to construct many irreducible polynomials whose height and derivatives have prescribed sizes.

\begin{lem}\label{lem:aux_lemma}
Let $n \ge 2$ be an integer, $p$ be a prime, $0<v<1$, $0<\kappa<1$, and let
\begin{equation}\label{theball}
 B := B(x_0,r) = \left\{
 x \in \Z_p : |x-x_0|_p \le r
 \right\},
\end{equation}
where $x_0 \in \Z_p$ and $0 < r \le 1$.
Then there exist positive constants $\delta_0$, $C_1$ and $C_2$ depending on $n$, $p$, and $\kappa$ only, and a constant $Q_0>0$ depending on $B$, $n$, $p$, $v$, and $\kappa$ only, such that the following holds.

For any $Q\ge Q_0$ and any parameters
\begin{equation}\label{xiorder}
0 < \xi_0 \le \dots \le \xi_{n-1} \le \xi_n = 1
\end{equation}
satisfying
\begin{equation}\label{xiorder+}
\prod_{i=0}^n \xi_i = Q^{-(n+1)}
\qquad\text{and}\qquad
\xi_0\le Q^{-1-v},
\end{equation}
there exists a measurable set $G_B \subset B$, depending on $n$, $p$, $B$, $\kappa$, $Q$, and $\xi_0,\dots,\xi_n$, such that
\begin{equation}\label{muB_G}
 \mu(G_B) \ge \kappa\mu(B).
\end{equation}
Moreover, for every $x \in G_B$ there are $n+1$ linearly independent primitive irreducible polynomials $P \in \Z[X]$ of degree $n$ satisfying
\begin{equation}\label{Height}
C_1Q\le H(P)\le C_2Q
\end{equation}
and
\begin{equation}\label{eq:aux_lemma_statement}
 \delta_0\xi_i \le
 \left|\frac{1}{i!}P^{(i)}(x)\right|_p
 \le \xi_i\quad(0 \le i \le n),
\end{equation}
where $P^{(i)}(x)$ denotes the $i$-th derivative of the polynomial $P$ at $x$.
\end{lem}

In the remainder of this section, we construct the set $G_B$, and for each $x\in G_B$ we construct primitive irreducible polynomials $P$ of degree $n$ satisfying \eqref{Height} and \eqref{eq:aux_lemma_statement}. The proof of \eqref{muB_G} relies on the so-called quantitative non-divergence estimate and will be given in Section~\ref{AuxLemSection}, after we recall the estimate and establish the necessary auxiliary results.}

\subsection{Auxiliary statements} \label{subsection:System of equations and the body}

{\colorblue{}Our first observation is that, in proving Lemma~\ref{lem:aux_lemma}, it suffices to assume that the parameters $\xi_0,\dots,\xi_n$ and $Q$ are integer powers of $p$. Indeed, suppose that we are given parameters $\xi_i$ and $Q>1$ satisfying \eqref{xiorder} and \eqref{xiorder+}.
Then, for $Q$ sufficiently large depending on $n$, we can choose integers $b_i \in \Z_{\ge0}$ such that
\begin{equation}\label{eqn2.5+}
p^{-b_i} \le \xi_i \le p^{-b_i+n}, \qquad
p^{-b_0} \le p^{-b_1} \le \dots \le p^{-b_n} = 1
\end{equation}
and
\begin{equation}\label{eqn2.5}
\sum_{i=0}^n b_i = t(n+1)
\end{equation}
for some $t \in \mathbb{N}$.
Define $\tilde{\xi}_i = p^{-b_i}$, $\tilde{Q} = p^t$, and $\tilde{v}=v/2$. Then
\[
p^{-n}\xi_i \le \tilde{\xi}_i \le \xi_i,
\]
and $\tilde{Q}/Q$ is bounded above and below by positive constants depending only on $n$ and $p$.

It is straightforward to see that the modified parameters $\tilde{Q}, \tilde{\xi}_0,\dots,\tilde{\xi}_n$, and $\tilde{v}$ satisfy \eqref{xiorder} and \eqref{xiorder+} for all sufficiently large $Q$.
Moreover, if the conclusion of Lemma~\ref{lem:aux_lemma} holds for the parameters $\tilde{Q},\tilde{\xi}_0,\dots,\tilde{\xi}_n,\tilde{v}$, then it also holds for the original parameters $Q,\xi_0,\dots,\xi_n,v$, up to adjusting the constants $Q_0,C_1,C_2,\delta_0$ with factors depending on $n$ and $p$ only.

Thus, while proving Lemma~\ref{lem:aux_lemma}, we may assume without loss of generality that
\begin{equation}\label{eqn2.6}
\xi_i = p^{-b_i} \qquad\text{and}\qquad Q = p^t\qquad\text{for some $b_i,t \in \Z_{\ge0}$.}
\end{equation}

\medskip

The following relatively well-known statement (cf. Lemma~2.2.2 in \cite{DattaGhosh2022}) will be required to use Minkowski's theorem for convex bodies in order to find integer polynomials $P$ of bounded height satisfying the right hand-side of \eqref{eq:aux_lemma_statement}.}

\begin{prop}\label{prop2.2}
Let $x\in\Z_p$ and $\xi_i$ be given by \eqref{eqn2.6}. Let $\Gamma$ be the set of integer points $(a_0,\dots,a_n)$ such that the polynomial $P=a_nX^n+\dots+a_0$ satisfies
\begin{equation} \label{polybounds oldNEW}
 \left|\frac{1}{i!} P^{(i)}(x)\right|_p \le \xi_i \qquad (0 \le i \le n)\,.
\end{equation}
Then $\Gamma$ is a sublattice of $\Z^{n+1}$ such that
\begin{equation}\label{eqn2.12}
 \operatorname{cov}(\Gamma) = \prod_{i=0}^n \xi_i^{-1}.
\end{equation}
\end{prop}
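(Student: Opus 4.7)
The plan is to realize $\Gamma$ as the $\Z$-preimage of an explicit open sublattice in $\Z_p^{n+1}$ under a $\Z$-linear map of determinant $1$, and then to compute the index $[\Z^{n+1}:\Gamma]$ via a $p$-adic density argument. First, using the Taylor expansion
$$
\frac{1}{i!}P^{(i)}(x) \;=\; \sum_{k=i}^{n}\binom{k}{i} x^{k-i}\, a_k,
$$
I would introduce the $\Z$-linear map
$$
T : \Z^{n+1} \longrightarrow \Z_p^{n+1},\qquad (a_0,\ldots,a_n)\longmapsto \Bigl(\tfrac{1}{i!}P^{(i)}(x)\Bigr)_{i=0}^{n}.
$$
In the standard basis this is represented by an upper-triangular $(n{+}1)\times(n{+}1)$ matrix with diagonal entries all equal to $1$ and off-diagonal entries $\binom{k}{i}x^{k-i}\in\Z_p$, so $\det T = 1$. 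In particular $T$ extends to a $\Z_p$-module automorphism of $\Z_p^{n+1}$.

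Second, let $\Lambda := \prod_{i=0}^n p^{b_i}\Z_p \subset \Z_p^{n+1}$. The bounds \eqref{polybounds oldNEW} are equivalent to $T(a_0,\ldots,a_n)\in\Lambda$, so $\Gamma = T^{-1}(\Lambda)\cap \Z^{n+1}$ is an additive subgroup of $\Z^{n+1}$. Setting $B:=\max_i b_i$, the inclusion $p^{B}\Z^{n+1}\subseteq \Gamma$ is immediate, since any integer vector in $p^B\Z^{n+1}$ is mapped into $p^{B}\Z_p^{n+1}\subseteq \Lambda$ by $T$. Hence $\Gamma$ has full rank and is a sublattice of $\Z^{n+1}$, so its covolume equals the index $[\Z^{n+1}:\Gamma]$.

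Third, I would compute this index by showing that the induced homomorphism
$$
\overline{T} : \Z^{n+1}/\Gamma \longrightarrow \Z_p^{n+1}/\Lambda \;\cong\; \prod_{i=0}^{n} \Z/p^{b_i}\Z
$$
is a bijection. Injectivity is built in, since $\Gamma$ is the full preimage of $\Lambda$ in $\Z^{n+1}$ and $T$ is injective. For surjectivity, given any $y\in\Z_p^{n+1}$, the fact that $T$ is a $\Z_p$-automorphism of $\Z_p^{n+1}$ produces $\widetilde{a}\in\Z_p^{n+1}$ with $T(\widetilde{a}) = y$; then density of $\Z$ in $\Z_p$ together with continuity of $T$ lets me perturb $\widetilde{a}$ coordinate-wise to an integer vector $\mathbf{a}\in\Z^{n+1}$ with $T(\mathbf{a})\equiv y \pmod{\Lambda}$. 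Therefore
$$
\cov(\Gamma)\;=\;[\Z^{n+1}:\Gamma]\;=\;\Bigl|\prod_{i=0}^{n}\Z/p^{b_i}\Z\Bigr|\;=\;\prod_{i=0}^{n}p^{b_i}\;=\;\prod_{i=0}^{n}\xi_i^{-1},
$$
which is \eqref{eqn2.12}. The only real subtlety is bridging the $p$-adic definition of $\Gamma$ with its Archimedean covolume, and this is precisely what the combination of $\det T = 1$ and the density of $\Z$ in $\Z_p$ takes care of.
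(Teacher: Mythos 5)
Your proof is correct. Both you and the paper exploit the same key facts — that the Taylor map $T$ is upper-triangular with unit diagonal (hence $\det T = 1$) and that $\Z$ is dense in $\Z_p$ — but you deploy them differently. The paper first uses density to reduce to the case $x\in\Z$, after which everything lives over $\Z$: the constraint set is literally $T^{-1}\Gamma_0$ for the congruence lattice $\Gamma_0=\prod p^{b_i}\Z$, and the covolume identity is just $\cov(T^{-1}\Gamma_0)=|\det T^{-1}|\cdot\cov(\Gamma_0)$. You instead keep $x\in\Z_p$ throughout, view $T$ as a $\Z_p$-automorphism, and compute the index $[\Z^{n+1}:\Gamma]$ by exhibiting $\Z^{n+1}/\Gamma\cong\Z_p^{n+1}/\Lambda\cong\prod_i\Z/p^{b_i}\Z$, with density of $\Z$ in $\Z_p$ entering only at the surjectivity step. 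Your route is slightly more module-theoretic and avoids the (true but unjustified in the paper) assertion that replacing $x$ by a nearby integer leaves $\Gamma$ unchanged; the paper's route is more immediate once that reduction is accepted. Both are valid and of comparable length.
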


\begin{proof}
The proof is elementary, but we give a brief argument for completeness.
Since $\Z$ is dense in $\Z_p$, we may assume without loss of generality that $x$ in \eqref{polybounds oldNEW} belongs to $\Z$. Then, the quantities $(i!)^{-1}P^{(i)}(x)$ are also in $\Z$ for any integer polynomial $P$. Hence, by \eqref{eqn2.6}, the system \eqref{polybounds oldNEW} is equivalent to the system $\frac{1}{i!}P^{(i)}(x) \equiv 0 \pmod{p^{b_i}}$ $(0 \le i \le n)$, which in turn can be written as
\begin{equation}\label{explicit}
\begin{pmatrix}
 1 & x & x^2 & \cdots & x^n\\
 0 & 1 & 2x & \cdots & nx^{n-1} \\
 0 & 0 & 1 & \cdots & \frac{1}{2}n(n-1)x^{n-2} \\
 \vdots & \vdots & \vdots & \ddots & \vdots \\
 0 & 0 & 0 & \cdots & 1
\end{pmatrix}
\begin{pmatrix}
 a_0 \\ a_1 \\ a_2 \\ \vdots \\ a_n
\end{pmatrix}=\begin{pmatrix}
 k_0p^{b_0} \\ k_1p^{b_1} \\ k_2p^{b_2} \\ \vdots \\ k_np^{b_n}
\end{pmatrix}
\end{equation}
for some $k_i\in\Z$, where $a_0,\dots,a_n$ are regarded as the coefficients of $P$, as in the statement. The set of points on the right of \eqref{explicit}, taken over all $k_0,\dots,k_n\in\Z$, is easily seen to be a sublattice of $\Z^{n+1}$, say $\Gamma_0$, of covolume $\prod_{i=0}^n p^{b_i}=\prod_{i=0}^n \xi_i^{-1}$. The matrix on the left of \eqref{explicit}, say $T$, has integer entries and a determinant of $1$. Hence $T$ has an inverse over $\Z$, and multiplying \eqref{explicit} on both sides by $T^{-1}$ gives an explicit parametrisation of $\Gamma$, which is $\Gamma=T^{-1}\Gamma_0$.
In particular, it means that $\Gamma$ is a sublattice of $\Z^{n+1}$ and
\begin{equation*}
 \operatorname{cov}(\Gamma) = \det T^{-1}\operatorname{cov}(\Gamma_0) = \prod_{i=0}^n \xi_i^{-1}
\end{equation*}
as stated.
\end{proof}

{\colorblue{}

\subsection{The set $G_B$ and the proof of \eqref{Height} and \eqref{eq:aux_lemma_statement}} \label{Main Lemma}

Our approach is based on \cite{alge}. Let $n,p,\kappa,v,\xi_0,\dots,\xi_n$ and $Q$ be as in Lemma~\ref{lem:aux_lemma}; in particular, \eqref{xiorder} and \eqref{xiorder+} are satisfied. Furthermore, as explained above, we assume without loss of generality that $\xi_i$ and $Q$ are integer powers of $p$, that is, \eqref{eqn2.6} holds.

Let 
$$
B_{Q}:=\{\vv a=(a_0,\dots,a_n)\in\R^{n+1}: \max_{0\le i\le n}|a_i|\le Q\}.
$$
Clearly, $B_Q$ is a convex body in $\R^{n+1}$ symmetric about the origin.} Furthermore,
\begin{equation}\label{eqn2.11}
\operatorname{vol}(B_{Q})=(2Q)^{n+1}\,.
\end{equation}
Let $\Gamma$ be the lattice as in Proposition~\ref{prop2.2}, and let $\lambda_1,\dots,\lambda_{n+1}$ be the successive minima of $B_{Q}$ with respect to $\Gamma$, that is
$$
\lambda_i:=\inf\big\{\lambda>0:\operatorname{rank}\big(\Gamma\cap(\lambda B_{Q})\big)\ge i\big\}\,.
$$
{\colorblue{}Note that the lattice $\Gamma$ and consequently each $\lambda_i$ depend on the choice of $x\in\Z_p$.}
By \eqref{eqn2.12}, \eqref{eqn2.11} and Minkowski's second theorem for convex bodies, we get that
\begin{equation}
 (2Q)^{n+1}
 \prod_{i=1}^{n+1}\lambda_i
 \le
 2^{n+1}\left({\prod_{i=0}^n \xi_i}\right)^{-1}.
\end{equation}
Hence, by \eqref{xiorder+} and the inequalities $\lambda_1\le\dots\le\lambda_{n+1}$, we get that
\begin{equation}\label{eq:minima_bound}
 \lambda_1^{n} \lambda_{n+1}
 \le
 \prod_{i=1}^{n+1}\lambda_i
 \le
 Q^{-(n+1)} \left({\prod_{i=0}^n \xi_i}\right)^{-1} = 1\,.
\end{equation}
Define
\begin{equation}\label{vb6.4}
 E(B;\varepsilon_0) = \{x\in B : \lambda_1 \le \varepsilon_0\}\,,
\end{equation}
where {\colorblue{}$B$ is as in \eqref{theball} and} $\varepsilon_0>0$ is a small parameter, to be determined later. Suppose that $x \in B\setminus E(B; \varepsilon_0)$. Then $\lambda_1 > \varepsilon_0$. Combining this with \eqref{eq:minima_bound} gives
\begin{equation}
 \lambda_{n+1}
 \le
 c_0:=(\varepsilon_0)^{-n}.
\end{equation}
Hence, by the definition of $\lambda_{n+1}$, there are $n+1$ linearly independent polynomials $P_j=a_{n,j}X^n+\dots+a_{0,j}\in\Z[X]$ for $0 \le j \le n$ satisfying \eqref{polybounds oldNEW} and
\begin{equation}\label{eqn4.8}
 \max_{0\le i\le n}|a_{i,j}|\le c_0Q
\end{equation}
{\colorblue{} for each $j$. We now modify the polynomials $P_j$ so as to obtain irreducible
polynomials while preserving the required $p$-adic estimates.}
Define the sublattice $\Lambda$ of $\Gamma$ as the $\Z$-span of $\vv a_j=(a_{0,j},\dots,a_{n,j})^T$ for $0\le j\le n$, where ${}^T$ means transposition. Then
\begin{equation*}
 \operatorname{cov}(\Lambda) = m \cdot \operatorname{cov}(\Gamma)\,,
\end{equation*}
where $m \in \N$ is the index of $\Lambda$ in $\Gamma$. {\colorblue{}Since the basis of $\Lambda$} can be chosen to be contained in the body defined by \eqref{eqn4.8}, {\colorblue{}each basis vector has Euclidean norm $\le\sqrt{n+1}c_0Q$}, and we have that
\begin{equation*}
 \operatorname{cov}(\Lambda) \le ({\colorblue{}\sqrt{n+1}}c_0Q)^{n+1} = ({\colorblue{}\sqrt{n+1}}c_0)^{n+1}\operatorname{cov}(\Gamma)\,,
\end{equation*}
where the latter follows from \eqref{xiorder+} and \eqref{eqn2.12}.
Hence, $m \le ({\colorblue{}\sqrt{n+1}}c_0)^{n+1}$. Choose a prime number {\colorblue{}$q$ such that $m < q \le 8m$} with $q \neq 2$ or $p$.
{\colorblue{}The interval $(m,8m]$ is sufficiently wide to contain at least three primes by Bertrand's Postulate, ensuring that a prime $q$ different from $2$ and $p$ can be chosen.}

Let $A=(a_{i,j})_{0\le i,j\le n}$ be the matrix whose columns are the vectors $\vv a_j$ $(0\le j\le n)$. Then $1\le |\det A|=\cov(\Lambda)=m\cov(\Gamma)$ and since $\cov(\Gamma)=Q^{n+1}$ is an integer power of $p$ and $q>m$, $q$ does not divide $\cov(\Lambda)$. Therefore $q$ does not divide $\det A$, and {\colorblue{}$A$ has an inverse modulo $q$. Thus, we can define an integer matrix $A^{-1}$ whose entries are integers in $[0,q-1]$ such that $A^{-1}A\equiv AA^{-1}\equiv I_{n+1}\pmod{q}$, where $I_{n+1}$ is the identity matrix. Hence,
\begin{equation}\label{AA}
AA^{-1}=I_{n+1}+qM
\end{equation}
for some integer matrix $M=(m_{i,j})_{0\le i,j\le n}$.
Let $S=(s_{i,j})_{0\le i,j\le n}$ be an $(n+1)\times(n+1)$ integer matrix with $1$'s in the last row and zeros elsewhere. That is,
\begin{equation}\label{S_conditions}
s_{n,j}=1,\;\; s_{i,j}=0\quad
(0\le i\le n-1,0\le j\le n).
\end{equation}
Let $R$ be an $(n+1)\times(n+1)$ integer matrix and define
\begin{equation}\label{def_H}
H=A^{-1}(S+qR).
\end{equation}
Then, using \eqref{AA}, we get that 
\begin{equation}\label{H}
AH\equiv S\pmod{q}.
\end{equation}
Furthermore, using \eqref{AA} and \eqref{def_H}, we get that
\begin{align}
\nonumber AH-S&=AA^{-1}(S+qR)-S
=(I_{n+1}+qM)(S+qR)-S\\
&=qMS+qR+q^2MR.\label{new_equation3}
\end{align}
Let
\begin{equation}\label{tildeH}
\tilde A=(\tilde a_{i,j})_{0\le i,j\le n}:=AH.
\end{equation}
By \eqref{S_conditions}, the first row of $\tilde A$ is the same as the first row of $AH-S$. Hence, by \eqref{S_conditions} and \eqref{new_equation3}, for each $j=0,\dots,n$ we have that
\begin{equation}\label{AH-S}
\tilde a_{0,j}\equiv q(m_{0,n}+r_{0,j})\pmod{q^2}.
\end{equation}
For each $j$ choose $r_{0,j}\in\{-1,0,1\}$ such that 
\begin{equation}\label{condition1}
m_{0,n}+r_{0,j}\not\equiv 0\pmod{q}\qquad(0\le j\le n)
\end{equation}
and
\begin{equation}\label{condition1+}
(r_{0,0},\dots,r_{0,n})\neq\pm(1,\dots,1)\quad\text{and}\quad(r_{0,0},\dots,r_{0,n})\neq(0,\dots,0).
\end{equation}
Next, let $r_{n,j}=0$ for $0\le j\le n$, and for $1\le i\le n-1$ choose $r_{i,j}\in\{0,1\}$ so that
\begin{equation}\label{condition2}
\det(S+qR)\neq0.
\end{equation}
The existence of such $r_{i,j}$ follows immediately from 
\eqref{S_conditions} and \eqref{condition1+}.

By \eqref{def_H}, \eqref{condition2}, and the fact that $\det A\neq0$ and $\det A^{-1}\neq0$, we have that $\det \tilde A\neq0$, where $\tilde A$ is defined by \eqref{tildeH}. Consequently, the integer polynomials $\tilde P_j:=\tilde{a}_{n,j}X^n+\dots+\tilde{a}_{0,j}$ are linearly independent.

By \eqref{S_conditions} and \eqref{H}, for each $j$ we have that $\tilde{a}_{i,j} \equiv 0 \pmod{q}$ for $0 \le i \le n-1$, $\tilde{a}_{n,j} \not\equiv 0 \pmod{q}$. Furthermore, by \eqref{AH-S} and \eqref{condition1}, we have that $\tilde{a}_{0,j} \not\equiv 0\pmod{q^2}$ $(0\le j\le n)$.
Therefore, $\deg \tilde P_j=n$ and, by Eisenstein's criterion, $\tilde P_j$ is irreducible for all $0\le j \le n$.

The height of $\tilde P_j$ can be estimated using \eqref{tildeH} by first obtaining an upper bound on the entries $h_{i,j}$ of $H$. 
By the choice of $A^{-1}$, $S$ and $R$, the entries of $A^{-1}$ and of $S+qR$ are bounded by $q$ in absolute value. Hence, by \eqref{def_H} and the fact that $q\le 8m$,
\begin{equation}\label{eta-bound}
 |h_{i,j}| \le (n+1)q^2 \le (n+1)(8m)^2.
\end{equation}
Choose the smallest $C_2\ge c_0(n+1)^2(8m)^2$ satisfying \begin{equation}\label{c2}
C_2=p^{2u}\qquad\text{for some $u\in\Z_{\ge0}$.}
\end{equation}
Then, by \eqref{eqn4.8} and \eqref{eta-bound}, we get from \eqref{tildeH} that
\begin{equation}\label{Upper2}
 \max_{0\le i \le n}|\tilde{a}_{i,j}| \le C_2Q
\end{equation}
for each $j$. Also, by construction, the coefficients of every polynomial $\tilde P_j$ are in $\Lambda\subset\Gamma$ and, hence, the upper bounds in \eqref{eq:aux_lemma_statement} hold for the polynomials $\tilde P_j$. Furthermore, recall that $E(B; \varepsilon_0)$ is given by \eqref{vb6.4}. Then,
by the assumption 
$x \in B\setminus E(B; \varepsilon_0)$, we have the lower bound in \eqref{Height} holds with $C_1=\varepsilon_0$ for each polynomial $\tilde P_j$. Together with \eqref{Upper2}, this verifies \eqref{Height} in full for each of the polynomials $\tilde P_j$, as well as the upper bounds in \eqref{eq:aux_lemma_statement}.

Now we turn to establishing the lower bounds in \eqref{eq:aux_lemma_statement}. This is done by imposing further restrictions on $x$.
For each $j=0,\dots,n$, consider the inequalities
\begin{equation}\label{eq:improved_Polys}
 \left|\frac{1}{i!} P^{(i)}(x)\right|_p \le \delta_i^j\xi_i\,,
\end{equation}
where
\begin{equation}\label{eq:delta_def}
 \delta_i^j = \begin{cases}
 \delta_0 \qquad &\text{if } i=j, \\
 1 \qquad &\text{otherwise}\,.
 \end{cases}
\end{equation}}
For each $j$ define
\begin{equation}\label{Ej}
 E_{j}(B;\delta_0):=
 \left\{
 x \in B : \begin{aligned}
 &\text{$\exists$ } P \in\Z[X]\text{ with }
 \deg(P)=n \\ &
 \text{ and } H(P)\le C_2Q \text{ such that}\\ & \text{inequalities \eqref{eq:improved_Polys} hold}
 \end{aligned}\right\}\,.
\end{equation}
Now let
\begin{equation}\label{G_B}
 G_B :=B \setminus \left(
 \bigcup^n_{j=0}E_j(B;\delta_0)
 \cup E(B;\varepsilon_0)
 \right)\,.
\end{equation}
{\colorblue{}Then, for any $x\in G_B$, the polynomials $\tilde P_j$ we have constructed necessarily satisfy the lower bounds in \eqref{eq:aux_lemma_statement}. 
This is because the removal of $x$ from $E_j(B;\delta_0)$ for each $j$ forces the inequality $|\frac1{j!}P^{(j)}(x)|_p>\delta_0\xi_j$ for every $j$ and every non-zero integer polynomial $P$ of height $\le C_2Q$.

In summary, for every $x\in G_B$, we have constructed $n+1$ linearly
independent irreducible polynomials $\tilde P_j\in\Z[X]$ of degree $n$ satisfying
\eqref{Height} and \eqref{eq:aux_lemma_statement}. Moreover, it follows
from the construction that the constants $C_1$ and $C_2$ depend only on
$n$, $p$, and $\varepsilon_0$.

Now we demonstrate that the polynomials $\tilde P_j$ can be taken to be primitive, that is, their coefficients will have no common divisor $>1$. First of all, we may assume the coefficients of $\tilde P_j$ have no common prime divisor different from $p$, since otherwise we can divide through by that divisor without affecting \eqref{eq:aux_lemma_statement} or the upper bound on the height. The lower bound on height will also be preserved for the same reason as we have given above, that is that $x$ is removed from $E(B;\varepsilon_0)$. Now suppose that the coefficients of $\tilde P_j$ are divisible by $p^{l_j}$ for some $l_j\in\N$ depending on $\tilde P_j$. Then, by \eqref{eq:aux_lemma_statement} for $i=n$ and the assumption $\xi_n=1$, see \eqref{xiorder}, we get that $p^{l_j}\le \delta_0^{-1}$. Then, cancelling $p^{l_j}$ from the coefficients of $\tilde P_j$ for each $j$ gives a collection of $n+1$ primitive irreducible integer polynomials $P$ satisfying
\begin{equation}\label{eq:aux_lemma_statement2}
 \delta_0\xi_i \le
 \left|\frac{1}{i!} P^{(i)}(x)\right|_p
 \le \delta_0^{-1}\xi_i\quad(0 \le i \le n)
\end{equation}
and $\delta_0C_1Q\le H(P)\le C_2 Q$. To return to system
\eqref{eq:aux_lemma_statement}, in which we have $\xi_i$ without any constant factor on the right hand-side, we simply apply the above construction to the following modified initial parameters: 
$$
\xi_i^*=\delta_0\xi_i\;(0\le i\le n-1),\quad \xi_n^*=1,\quad Q^*=\delta_0^{-n/(n+1)}Q,\quad v^*=v/2,
$$ 
which also satisfy \eqref{xiorder} and \eqref{xiorder+} for all sufficiently large $Q$ making the above construction applicable. As a result, we will obtain $n+1$ primitive irreducible linearly independent polynomials of degree $n$ satisfying exactly \eqref{eq:aux_lemma_statement} and $C_1 Q\le H(P)\le C_2Q$, up to adjusting the constants $Q_0,C_1,C_2,\delta_0$ with factors depending on $n$, $p$, and $\delta_0$ only.

\medskip

To complete the proof of Lemma~\ref{lem:aux_lemma} it remains to
\begin{itemize}
 \item[(i)] verify the measure estimate \eqref{muB_G} for $Q\ge Q_0$ for a suitably chosen $Q_0$,
 \item[(ii)] show that $\varepsilon_0$ and $\delta_0$ can be chosen to depend on $n$, $p$, and $\kappa$ only.
\end{itemize}
Both of these tasks require a quantitative non-divergence estimate, which we now introduce.
}

\section{A quantitative non-divergence estimate} \label{AuxLemSection}

\subsection{A result of Kleinbock and Tomanov}\label{KT0}

{\colorblue{}We will use the following statement from \cite{flows}, whose notation will be explained immediately after the theorem.}

\begin{thm}[Theorem 9.3 of \cite{flows}]\label{thm:Original_aux}
Let $X$ be a Besicovitch metric space, $\mu$ a uniformly Federer measure on $X$, and let $S$ be a finite collection of valuations of $\Q$ including the Archimedean one.
Let $m \in \N$, and let a ball $B=B(x_0,r_0) \subset X$ and a continuous map 
\begin{equation}\label{GLQS}
{\colorblue{}
h:\tilde{B} \rightarrow \GL(m,\Q_S):=\prod_{v\in S}\GL(m, \Q_v)}
\end{equation}
be given,
where $\tilde{B}=B(x_0,3^{m}r_0)$.
Now suppose that for some $C,\alpha >0$ and $0<\rho <1$ one has
\begin{enumerate}
\item[{\rm(1)}] for all $\Delta\in \mathfrak{B}(\Z_S,m)$, the function $\operatorname{cov}(h(\cdot)\Delta)$ is $(C,\alpha)$-good on $\tilde{B}$ with respect to $\mu$;
\item[{\rm(2)}] for all $\Delta\in \mathfrak{B}(\Z_S,m)$, $\|\operatorname{cov}(h(\cdot )\Delta)\|_{\mu,B} \ge \rho$.
\end{enumerate}
Then, for every $0<\varepsilon \le \rho$,
\begin{equation}
\mu \left( \{x \in B : \delta(h(x)\Z_S^{m})\le\varepsilon\}\right)
\le
mC\left( N_X D_\mu^2 \right)^{m} \left(\frac{\varepsilon}{\rho}\right)^\alpha \mu(B).
\end{equation}
\end{thm}

{\colorblue{}We note that the original statement in \cite[Theorem 9.3]{flows} is given with the strict inequality $\delta(h(x)\Z_S^{m})<\varepsilon$. However, replacing the strict inequality with a non-strict inequality is simple. One can take $\varepsilon'>\varepsilon$, apply the original statement with $\varepsilon'$, and then let $\varepsilon'\to\varepsilon$.}

The definitions used in Theorem~\ref{thm:Original_aux} can be found in
\cite{flows} in full generality. Here we recall only those aspects that will be needed in the setting considered in this paper:
\begin{center}
\begin{tabular}{ c| c }
Terms in Theorem \ref{thm:Original_aux} & Specific definition in our case\\
\hline
 Metric space $X$ & $\Q_p$ \\
 Measure $\mu$ on $X$ & Haar measure $\mu$ with $\mu(\Z_p)=1$\\
 Set of valuations $S$ & $\{p,\infty\}$\\
 Parameter $m$ & $n+1$
\end{tabular}
\end{center}
Because of the ultrametric property, $\Q_p$ is a Besicovitch metric space with the Besicovitch constant $N_{\Q_p}=1$. It is also readily verified that the Haar measure on $\Q_p$ is uniformly Federer with Federer constant $D_\mu\le3p$; see \cite{flows}.

{\colorblue{}The notation $\|\cdot\|_{\mu,B}$ stands for the $\mu$-essential supremum of a function on $B$. We will also write $\|\cdot\|_{B}$ for $\|\cdot\|_{\mu,B}$ when $\mu$ is Haar measure on $\Q_p$.}

{\colorblue{}Next, the set $\Q_S$ is defined to be the direct product of the completions $\Q_v$ of $\Q$ over $v \in S$ with $\Q$ embedded into $\Q_S$ diagonally, that is, $\Q\ni r\mapsto (r,\dots,r)\in\Q_S$. 
In our case of interest where $S=\{p,\infty\}$, we have $\Q_S=\Q_p\times\R$, and so 
$$
\Q_S^{n+1}=\Q_p^{n+1}\times\R^{n+1}
$$
and
\begin{equation}\label{GL}
\GL(n+1,\Q_S)=\GL(n+1,\Q_p)\times \GL(n+1,\R).
\end{equation}}

Given $\mathbf{x} = (\mathbf{x}^{(v)})_{v\in S} \in \Q^{n+1}_S$, the quantity $c(\vv x)$, called the {\em content} of $\vv x$, is defined as
\begin{equation}\label{eqn3.4+}
c(\mathbf{x}):=\prod_{v\in S}\|\mathbf{x}^{(v)}\|_v\,,
\end{equation}
where the $v$-norm of $\vv x^{(v)}=(x^{(v)}_0,\dots,x^{(v)}_n)$ is given by
$$
\|\vv x^{(v)}\|_v=\max\{|x^{(v)}_0|_v,\dots,|x^{(v)}_n|_v\}\,.
$$
{\colorblue{}The ring $\Z_S\subset\Q$ of $S$-integers is defined as the set of $r\in\Q$ such that $|r|_q\le1$ for any prime $q\not\in S$.} For $S=\{p,\infty\}$, we have that $\Z_S=\Z[\frac1p]$. It consists of all integers and all rational numbers whose denominators are positive integer powers of $p$.
Further, $\mathfrak{B}(\Z_S,n+1)$ is the set of all non-zero primitive submodules of $\Z_S^{n+1}$. Note that if
$\Lambda$ is a discrete $\Z_S$-submodule of $\Q^{n+1}_S$, then $\Lambda$ is of the form $g\Delta$ for some $g\in\GL(n+1,\Q_S)$ and a discrete submodule $\Delta$ of $\Z_S^{n+1}$ \cite{flows}.
By \cite[Lemma~8.2]{flows}, if
$$\Lambda=\Z_S\vv a_1\oplus\cdots\oplus\Z_S\vv a_k
$$
is a $\Z_S$-submodule of $\Q_S^{n+1}$, then its (appropriately normalised) covolume can be computed as the content of the wedge product of its $\Z_S$-basis vectors:
\begin{equation}\label{eqn3.4}
\cov(\Lambda)=c(\vv a_1\wedge\dots\wedge\vv a_k)\,.
\end{equation}
Finally, for $\Lambda\subset\Q^{n+1}_S$, define
\begin{equation}
 \delta(\Lambda) := \min \left\{
 c(\mathbf{x}) : \mathbf{x} \in \Lambda \setminus \{\mathbf{0}\}
 \right\}\,.
\end{equation}
Regarding the definition of $(C,\alpha)$-good functions, used in Theorem~\ref{thm:Original_aux}, we refer to \cite{flows}. In the application of Theorem~\ref{thm:Original_aux} considered in this paper, the corresponding functions will always be polynomials. Our needs will therefore be fully covered by the following lemma.

\begin{lem}[Lemma 3.4 of \cite{flows}]\label{lem:polys_are_good}
Let $F$ be either $\R$ or a locally compact ultrametric valued field. Then, for any $d,k \in \N$, any polynomial $f\in F[X_1,\dots,X_d]$ of degree not greater than $k$ is $(C,1/dk)$-good on $F^d$ with respect to Haar measure, where $C$ is a constant depending only on $d$ and $k$.
\end{lem}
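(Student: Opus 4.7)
The plan is to handle the one-dimensional case first by factoring, and then induct on the number of variables via Fubini.

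For $d=1$, given $f\in F[x]$ with $\deg f\le k$, I would factor $f(x)=a\prod_{i=1}^{k}(x-\alpha_i)$ over $\overline{F}$ (counting multiplicity, padding with artificial factors if $\deg f<k$). The sub-level set $\{x\in B:|f(x)|<\varepsilon\}$ is then contained in $\bigcup_i\{x\in B:|x-\alpha_i|<(\varepsilon/|a|)^{1/k}\}$, and each piece has Haar measure $\ll_k(\varepsilon/|a|)^{1/k}$ (in the ultrametric case, the set $\{x\in F:|x-\alpha_i|<r\}$ is itself an ultrametric ball). Matching this against a lower bound $\sup_B|f|\gtrsim_k|a|\,r(B)^k$, obtained by producing a point in $B$ sufficiently far from all the roots, yields the $(C_k,1/k)$-goodness bound in one variable.

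For the inductive step, without loss of generality take $B=B'\times B''$ with $B'\subset F^d$ and $B''\subset F$ balls. For each $y\in B''$, the restriction $f_y(x):=f(x,y)$ is a $d$-variable polynomial of degree $\le k$, hence by induction $(C_d,1/(dk))$-good. Fubini yields
$$
\lambda_{d+1}\bigl(\{|f|<\varepsilon\}\cap B\bigr)\;\le\;C_d\,\lambda_d(B')\int_{B''}\min\!\left(1,\bigl(\varepsilon/\|f_y\|_{B'}\bigr)^{1/(dk)}\right)dy.
$$
To estimate the integral, I would select a ``witness point'' $x_0\in B'$ with $\sup_{B''}|f(x_0,\cdot)|\gtrsim_{d,k}\sup_B|f|$; the existence of such an $x_0$ follows from a Chebyshev/compactness argument on the finite-dimensional space of $(d+1)$-variable polynomials of degree $\le k$. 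Since $\|f_y\|_{B'}\ge|f(x_0,y)|=:|g(y)|$ with $g$ a one-variable polynomial of degree $\le k$, a layer-cake (distribution-function) decomposition combined with the one-dimensional goodness of $g$ produces the desired bound with exponent $1/((d+1)k)$.

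The main obstacle is this witness-point estimate: while existence is immediate, obtaining a dimensional constant $c_{d,k}>0$ with $\sup_{B''}|f(x_0,\cdot)|\ge c_{d,k}\sup_B|f|$ requires a Bernstein/Chebyshev-type inequality for polynomials on balls that is uniform across the real and ultrametric cases. The quantitative bookkeeping through the induction explains the exponent $1/(dk)$ rather than a naive $1/k$: the drop from $1/(dk)$ to $1/((d+1)k)$ is precisely what allows the layer-cake integral to converge without a logarithmic factor and keeps the inductive constant $C_{d+1}$ finite.
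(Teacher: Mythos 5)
The paper does \emph{not} prove this lemma: it is quoted verbatim from Kleinbock--Tomanov (Lemma~3.4 of \cite{flows}), together with a pointer to Tomanov (Lemma~4.1 of \cite{good_polys_Tomanov}) for the one-variable case that the paper actually uses. Your induction-on-$d$ via Fubini is essentially the scheme of those cited proofs, so the plan is the right one, but a few points need fixing. The ``main obstacle'' you flag is not one: since $B$ is compact, $\sup_B|f|$ is attained at some $(x^*,y^*)\in B'\times B''$, and taking $x_0=x^*$ gives $\sup_{y\in B''}|f(x_0,y)|\ge |f(x^*,y^*)|=\sup_B|f|$, i.e.\ the witness constant is $1$ with no Bernstein/Chebyshev input whatsoever. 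You do, however, gloss over the fact that the Fubini decomposition $B=B'\times B''$ requires sup-norm balls; Euclidean balls in $\R^{d+1}$ do not factor as products, so in the real case one must work with cubes and invoke the standard equivalence of $(C,\alpha)$-goodness across comparable ball families. Your account of the exponent is also off: a logarithmic loss occurs \emph{only} at the step $d=1\to2$, where the layer-cake integral $\int_0^1\min(1,As^{-1})\,ds\asymp A\bigl(1+\log(1/A)\bigr)$ must be absorbed into $A^{1/2}$ to give $\beta_2=1/(2k)$; for every $d\ge2$ the relevant integral $\int_0^1\min(1,As^{-m})\,ds\asymp A^{1/m}$ with $m=1/(k\beta_d)\ge2$ already converges with no log, and a tight execution of your plan would in fact yield the stronger exponent $1/(2k)$ uniformly in $d\ge2$. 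It is the coarser two-term threshold split (optimising a single cutoff $T$ in $|g|$), not the exact layer cake, that reproduces precisely the stated $1/(dk)$ via the recursion $\beta_{d+1}=\beta_d/(k\beta_d+1)$. None of this is fatal --- either variant proves the lemma --- but as written the quantitative reasoning does not actually explain, let alone force, the exponent $1/(dk)$.
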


The proof of this lemma for the case $d=1$ considered in this paper can also be found in \cite[Lemma 4.1]{good_polys_Tomanov}. 
We can now specialise Theorem~\ref{thm:Original_aux} to the setting of this paper.

\begin{corr}\label{mine}
Let $\mu$ be the Haar measure on $\Q_p$ normalised so that $\mu(\Z_p)=1$, $S= \{p,\infty\}$, and $h:\tilde{B} \rightarrow \GL(n+1, \Q_S)$ be a map, where $B:=B(x_0,r)$ and $\tilde{B} = B(x_0,3^{n+1}r)$ are balls in $\Q_p$.
Suppose that for some $C, \alpha >0$ and $0<\rho <1$ one has
\begin{enumerate}
\item[{\rm(1)}] for all $\Delta\in \mathfrak{B}(\Z_S,n+1)$, the function $\operatorname{cov}(h(\cdot)\Delta)$ is $(C,\alpha)$-good on $\tilde{B}$;
 \item[{\rm(2)}] for all $\Delta\in \mathfrak{B}(\Z_S,n+1)$, $\|\operatorname{cov}(h(\cdot )\Delta)\|_{B} \ge \rho$.
 \end{enumerate}
Then, for every $0<\varepsilon \le \rho$,
\begin{equation}
 \mu \left( \{x \in B : \delta(h(x)\Z_S^{n+1})\le \varepsilon\}\right)
 \le
 C(n+1)(3p)^{2(n+1)} \left(\frac{\varepsilon}{\rho}\right)^{\alpha} \mu(B).
\end{equation}
\end{corr}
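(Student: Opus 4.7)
The proof is essentially a direct specialization of Theorem~\ref{thm:Original_aux} to the setting $X=\Q_p$, $m=n+1$, $S=\{p,\infty\}$, with $\mu$ Haar measure on $\Q_p$ normalized by $\mu(\Z_p)=1$. The plan is simply to verify the ambient hypotheses on $(X,\mu)$ and to substitute the resulting numerical constants into the conclusion of Theorem~\ref{thm:Original_aux}.

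First I would note the two facts about $\Q_p$ already recorded immediately after the statement of Theorem~\ref{thm:Original_aux}: namely, because of the ultrametric property of $|\cdot|_p$, any two distinct balls in $\Q_p$ are either disjoint or nested, from which it follows that $\Q_p$ is a Besicovitch metric space with Besicovitch constant $N_{\Q_p}=1$, and that the normalized Haar measure on $\Q_p$ is uniformly Federer with Federer constant $D_\mu\le 3p$ (this is established in \cite{flows}). With these two facts in hand, the hypotheses (1) and (2) in the statement of the corollary are exactly the hypotheses (1) and (2) of Theorem~\ref{thm:Original_aux} applied with the above choices, and the map $h\colon\Tilde{B}\to\GL(n+1,\Q_S)$ is the one provided by the corollary.

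Then I would apply Theorem~\ref{thm:Original_aux} directly. Its conclusion gives
\begin{equation*}
\mu\left(\{x\in B : \delta(h(x)\Z_S^{n+1})<\varepsilon\}\right)
\le (n+1)\,C\,\bigl(N_{\Q_p}\,D_\mu^{2}\bigr)^{n+1}\left(\frac{\varepsilon}{\rho}\right)^{\alpha}\mu(B).
\end{equation*}
Substituting $N_{\Q_p}=1$ and $D_\mu\le 3p$ into the factor $(N_{\Q_p}D_\mu^2)^{n+1}$ yields $(3p)^{2(n+1)}$ as an upper bound, which gives exactly the claimed estimate
\begin{equation*}
\mu\left(\{x\in B : \delta(h(x)\Z_S^{n+1})<\varepsilon\}\right)
\le C(n+1)(3p)^{2(n+1)}\left(\frac{\varepsilon}{\rho}\right)^{\alpha}\mu(B).
\end{equation*}

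There is no genuine obstacle here: the corollary is a bookkeeping specialization. If anything, the only point requiring a brief explicit justification is the value $D_\mu\le 3p$ for the Federer constant, which follows from the standard observation that for the Haar measure $\mu$ on $\Q_p$ and any ball $B(x_0,r)$ and scaling factor $c>1$ one has $\mu(B(x_0,cr))\le p\lceil c\rceil\,\mu(B(x_0,r))$; taking $c=3$ gives the bound used above. Everything else is just substitution of the particular values of the parameters into the already-established general theorem.
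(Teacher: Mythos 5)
Your proof is correct and is essentially identical to what the paper does: Corollary~\ref{mine} is indeed just Theorem~\ref{thm:Original_aux} specialized to $X=\Q_p$, $m=n+1$, $S=\{p,\infty\}$, with the constants $N_{\Q_p}=1$ and $D_\mu\le 3p$ plugged into $mC(N_XD_\mu^2)^m$ to give $C(n+1)(3p)^{2(n+1)}$. The paper does not even write out a separate proof beyond recording these two constants before the statement, so your substitution argument is the intended one.
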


{\colorblue{}\subsection{The map $h$}\label{themaph}

In this paper, we will apply Corollary~\ref{mine} to the map $h$ defined as follows.} For every $x\in\Q_p$ define
\begin{align}
 h_1(x) &=
 \begin{pmatrix}
 g_0 & 0 & \cdots & 0\\
 0 & g_1 & \cdots & 0 \\
 \vdots & \vdots & \ddots & \vdots \\
 0 & 0 & \cdots & g_n
 \end{pmatrix}
 \begin{pmatrix}
 1 & x & \cdots & x^n\\
 0 & 1 & \cdots & nx^{n-1} \\
 \vdots & \vdots & \ddots & \vdots \\
 0 & 0 & \cdots & 1
 \end{pmatrix}, \label{h1_Matrix}
 \\[1ex]
 h_2(x) &=
 d \cdot I_{n+1}\label{h2_Matrix},
\end{align}
{\colorblue{}where $g_0,\dots,g_n$ and $d$ are integer powers of $p$, and so $g_i=|g_i|_p^{-1}$ for each $i$.} Additionally, we will require that
\begin{equation}\label{eq:gidi=1}
 d^{n+1}\prod_{i=0}^n|g_i|_p =1\,.
\end{equation}
Finally, define
\begin{equation} \label{eq:full_h_map}
 h:=(h_1,h_2):\Q_p \rightarrow \GL(n+1,\Q_S),
\end{equation}
{\colorblue{}where $S=\{p,\infty\}$ and $\GL(n+1,\Q_S)$ is given by \eqref{GL}.
Thus, $h$ assigns to each $x\in\Q_p$ the pair $(h_1(x),h_2(x))$. We note that, by \eqref{h2_Matrix}, the second component of $h$, namely $h_2$, does not depend on $x$.}

{\colorblue{}In what follows,
$\xi_0, \dots, \xi_n, Q$ satisfy \eqref{xiorder}, \eqref{xiorder+} and \eqref{eqn2.6}. Let $C_2$ be as in \eqref{c2}. We will use two cases for the choice of parameters $g_i$ and $d$.

\medskip

\noindent\underline{Case 1}. Let $j\in\{0,\dots,n\}$, 
\begin{equation}\label{delta0}
\delta_0=p^{2(n+1)r}\quad\text{ for some $r\in\Z$,}
\end{equation}
$\delta_*$ be defined from the equation
\begin{equation}\label{delta*}
\delta_0=\delta_*^{2(n+1)}C_2^{-n-1},
\end{equation}
and $\delta_i^j$ be given by \eqref{eq:delta_def}.
In view of \eqref{c2} and \eqref{delta0}, $\delta_*$ is an integer power of $p$. Further, let
\begin{align}
 |g_i|_p &= \frac{\delta_*}{\delta_i^j\xi_i} = \begin{cases}
 \dfrac{\delta_*^{1-{2(n+1)}}C_2^{n+1}}{\xi_i} \qquad &\text{if } i=j, \\
 \dfrac{\delta_*}{\xi_i} \qquad &\text{otherwise}\,,
 \end{cases} \label{defn:g_i}
 \\
 d &= \frac{\delta_*}{C_2Q}\,. \label{defn:d}
\end{align}

Using \eqref{xiorder+} and \eqref{eq:delta_def} it is easily seen that condition \eqref{eq:gidi=1} is fulfilled. Next, using \eqref{h1_Matrix}, \eqref{h2_Matrix}, \eqref{defn:g_i} and \eqref{defn:d}, we see that \eqref{eq:improved_Polys} together with the condition $H(P)\le C_2Q$ is equivalent to the system
 \begin{equation}\label{h1a less than delta}
 \left\|h_1(x)\vv a\right\|_p \le \delta_*\,,\qquad
 \left\|h_2(x)\vv a \right\|_\infty \le \delta_*\,,
 \end{equation}
where $\vv a=(a_0,\dots,a_n)^T$ is the column vector of the coefficients of $P$.
Then, using the definition of $E_{j}(B;\delta_0)$, given by \eqref{Ej}, we obtain the following proposition.

\begin{prop}\label{prop:start of QND}
Let $\xi_0, \cdots, \xi_n, Q$ satisfy \eqref{xiorder}, \eqref{xiorder+} and \eqref{eqn2.6}. Let $j\in\{0,\dots,n\}$, $C_2$ be as in \eqref{c2}, $\delta_0$ satisfy \eqref{delta0} and
$\delta_*$ be given by \eqref{delta*}. Let $h_1$ and $h_2$ be given by \eqref{h1_Matrix} and \eqref{h2_Matrix} with $g_i$ and $d$ given by \eqref{defn:g_i} and \eqref{defn:d}. Then
\begin{equation}\label{set1}
E_{j}(B;\delta_0)\;\subset\; \{x\in B:\exists\;\vv a\in\Z^{n+1}_{\neq\vv0}\;\;\text{satisfying }\eqref{h1a less than delta}\}.
\end{equation}
\end{prop}

\bigskip

\noindent\underline{Case 2}. Let $\delta_*$ be an integer power of $p$ and}
\begin{align}
 |g_i|_p &= \frac{\delta_*}{\xi_i} \label{defn:g_iQ},
 \\
 d &= \frac{1}{\delta_* Q} \label{defn:dQ}.
\end{align}
{\colorblue{}Using \eqref{xiorder+} it is easily seen that condition \eqref{eq:gidi=1} is fulfilled.

Next, using \eqref{h1_Matrix}, \eqref{h2_Matrix}, \eqref{defn:g_iQ} and \eqref{defn:dQ}, we see that \eqref{polybounds oldNEW} together with {\colorblue{}the assumption}
\begin{align}
 H(P)=\max_{0\le i \le n} |a_i| &\le \delta_*^2 Q\label{eq2.26}
\end{align}
is equivalent to the system \eqref{h1a less than delta}, where $\vv a=(a_0,\dots,a_n)^T$ is the column vector of the coefficients of $P$.
Then, setting 
\begin{equation}\label{delta*2}
\delta_*^2=\varepsilon_0
\end{equation}
and using the definition of $E(B;\varepsilon_0)$, given by \eqref{vb6.4}, we obtain the following proposition.

\begin{prop}\label{prop:start of alt QND}
Let $\xi_0, \cdots, \xi_n, Q$ satisfy \eqref{xiorder}, \eqref{xiorder+} and \eqref{eqn2.6}. Let $\delta_* >0$ be an integer power of $p$ such that $\delta_*^2=\varepsilon_0$. Let $h_1$ and $h_2$ be given by \eqref{h1_Matrix} and \eqref{h2_Matrix} with $g_i$ and $d$ given by \eqref{defn:g_iQ} and \eqref{defn:dQ}. Then
\begin{equation}\label{set2}
E(B;\varepsilon_0)\subset \{x\in B:\exists\;\vv a\in\Z^{n+1}_{\neq\vv0}\;\;\text{satisfying }\eqref{h1a less than delta}\}.
\end{equation}
\end{prop}

The aim is now to show that, in both cases, the map $h$ as defined in \eqref{h1_Matrix}--\eqref{eq:full_h_map} satisfies the hypotheses of Corollary \ref{mine}. This will enable us to use Corollary \ref{mine} to estimate the measure of the sets in \eqref{set1} and \eqref{set2} from above, and consequently derive the measure estimate \eqref{muB_G}.}

\subsection{Verifying conditions (1) and (2) in Corollary \ref{mine}}

{\colorblue{}Condition (1) in Corollary \ref{mine} will follow from Lemma \ref{lem:polys_are_good} once we establish that the coordinates of the relevant multivectors are polynomial functions of $x$. The main task of this subsection is therefore to verify condition (2) of Corollary \ref{mine}.}
We begin with auxiliary statements (Proposition~\ref{prop:ordering} and Corollaries~\ref{corr:product_of_dg_i_is_bigger_than_1} and \ref{corr:product_of_dg_i_is_bigger_than_1-w}) regarding the parameters $g_i$ and $d$ defined in \S\ref{themaph}.

\begin{prop}\label{prop:ordering}
For $0 \le i \le n$, let $g_i$ and $d$ be integer powers of $p$ satisfying \eqref{eq:gidi=1}.
Further, suppose that for some parameters $s_1,\dots,s_{n}\ge s_0:=1$, we have that
\begin{equation}\label{eq3.6}
s_ig_i \le s_{i+1}g_{i+1}\qquad\text{for $0 \le i \le n-1$.}
\end{equation}
Then for all $1 \le k \le n$
\begin{equation}\label{eq:the_max_product_gid}
 \left(\prod^{k-1}_{i=0}d|g_i|_p\right)^{-1} \le \max \left\{ \frac{1}{d|g_0|_p},|g_n|_pd\prod_{i=1}^{n-1}s_i\right\}\,.
 \end{equation}
\end{prop}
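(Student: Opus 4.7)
The plan is to rewrite the left-hand side in a dual form using the product constraint, to identify an index maximising the quantity over $k$, and to exploit the monotonicity of the sequence $(s_i g_i)$ to control the relevant $g_i$'s.

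First I would observe that since each $g_i$ is an integer power of $p$, viewed as a positive real it satisfies $|g_i|_p = g_i^{-1}$, so the hypothesis $\prod_{i=0}^n(|g_i|_p d)=1$ becomes $\prod_{i=0}^n g_i = d^{n+1}$. Setting
$$
L_k \;:=\; \Bigl(\prod_{i=0}^{k-1} d|g_i|_p\Bigr)^{-1} \;=\; \prod_{i=0}^{k-1} \frac{g_i}{d},
$$
this constraint yields the alternative expression $L_k = \prod_{i=k}^n (d/g_i)$ for every $1\le k\le n$. Since $L_1 = g_0/d = 1/(d|g_0|_p)$ is precisely the first term of the maximum on the right-hand side of \eqref{eq:the_max_product_gid}, the inequality is trivial for $k=1$. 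For the remaining indices it will suffice to bound $L_{k^*}$, where $k^*\in\{1,\dots,n\}$ is chosen to be any index at which the finite sequence $(L_k)$ attains its maximum, since $L_k\le L_{k^*}$.

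The essential identity is $L_{k+1}/L_k = g_k/d$. If $k^*=1$, then $L_k \le L_1 = g_0/d$ for every $k$ and the first term of the maximum handles us. Otherwise the maximality of $k^*$ forces $L_{k^*}/L_{k^*-1}\ge 1$, hence $g_{k^*-1} \ge d$; combined with $s_{k^*-1}\ge 1$ this gives $s_{k^*-1}g_{k^*-1} \ge d$. The monotonicity hypothesis $s_i g_i \le s_{i+1} g_{i+1}$ then propagates the bound upwards: $s_i g_i \ge d$, equivalently $d/g_i \le s_i$, for every $i\ge k^*-1$. Substituting into the dual expression yields
$$
L_{k^*} \;=\; \frac{d}{g_n}\prod_{i=k^*}^{n-1}\frac{d}{g_i} \;\le\; \frac{d}{g_n}\prod_{i=k^*}^{n-1} s_i \;\le\; \frac{d}{g_n}\prod_{i=1}^{n-1} s_i \;=\; |g_n|_p\, d\prod_{i=1}^{n-1} s_i,
$$
where the final inequality uses $s_i\ge 1$ to adjoin the missing factors $s_1,\dots,s_{k^*-1}$.

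The argument is essentially elementary; no single step is a serious obstacle. The one non-obvious manoeuvre is to use the product constraint to swap $\prod_{i=0}^{k-1} d/g_i$ for $\prod_{i=k}^n d/g_i$, thereby transferring the problem from the ``small'' indices $0,\dots,k-1$ to the ``large'' indices $k,\dots,n$, where the monotonicity of $(s_i g_i)$ anchored at $k^*-1$ immediately delivers the desired upper bound in terms of $|g_n|_p d$ and $s_1,\dots,s_{n-1}$.
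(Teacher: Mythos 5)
Your proof is correct. The essential ingredients are the same as the paper's: you rewrite the quantity using the product constraint $\prod_{i=0}^n g_i = d^{n+1}$, locate the maximising index, and invoke the monotonicity of $(s_i g_i)$. The paper proceeds by first bounding $L_k \le \Pi_{k-1}:=\prod_{i=0}^{k-1}s_ig_i/d$ (losing factors $s_i\ge1$ up front on the small indices) and then observing that $\Pi$ is monotone-then-monotone in $k$, so its maximum sits at an endpoint; you instead keep $L_k$ tight, pick the maximiser $k^*$, pass to the dual product $\prod_{i=k^*}^n d/g_i$, and only then introduce the $s_i$ on the large indices via the propagated bound $s_ig_i\ge d$ for $i\ge k^*-1$. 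This is a mild streamlining — it avoids the explicit V-shape picture and the extraneous $s_i$ factors on the left block — but the underlying argument is the same.
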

\begin{proof}
First note that $(\prod^{k-1}_{i=0}d|g_i|_p)^{-1}=\prod^{k-1}_{i=0}d^{-1}g_i$ since each $g_i$ is a power of $p$.
Using the inequalities $s_ig_i \le s_{i+1}g_{i+1}$ we get that
\begin{equation}
 \frac{g_0}{d} \le \frac{s_1g_1}{d}\le \dots \le \frac{s_ng_n}{d}.
\end{equation}
Define $j_0$ (if it exists) to be the minimum of all possible $j$ such that $s_jg_jd^{-1}\ge1$. Then it is readily seen that there are four different types of behaviour of the product $\Pi_k:=\prod^k_{i=0} s_ig_id^{-1}$ as a function of $k$, summarized in the figure below. In each case the maximal value of the product (over $0\le k\le n-1$) is achieved at either $k=0$ or $k=n-1$.
\begin{figure}[h]
\centering
 \begin{subfigure}[t]{0.23\textwidth}
 \begin{tikzpicture}[scale=0.5]
 \draw[thick,->] (0,0) -- (4.5,0)node[anchor=north west] {$k$};
\draw[thick,->] (0,0) -- (0,4.5);
\draw [thick] (4,0.2) parabola (0.2,4);
\end{tikzpicture}
 \caption{$j_0$ does not\newline exist}
 \end{subfigure}
\hfill
 \begin{subfigure}[t]{0.23\textwidth}
 \begin{tikzpicture}[scale=0.5]
 \draw[thick,->] (0,0) -- (4.5,0)node[anchor=north west] {$k$};
\draw[thick,->] (0,0) -- (0,4.5);
\draw [thick] (0.2,0.2) parabola (4,4);
\end{tikzpicture}
 \caption{$j_0=0$}
 \end{subfigure}
\hfill
 \begin{subfigure}[t]{0.23\textwidth}
 \begin{tikzpicture}[scale=0.5]
 \draw[thick,->] (0,0) -- (4.5,0)node[anchor=north west] {$k$};
 \draw[thick,->] (0,0) -- (0,4.5);
 \draw [thick] (0.2,4) .. controls (2,0) .. (4,3);
 \end{tikzpicture}
 \caption{$j_0>0$,\newline $\Pi_0 \ge \Pi_{n-1}$}
 \end{subfigure}
\hfill
 \begin{subfigure}[t]{0.23\textwidth}
 \begin{tikzpicture}[scale=0.5]
 \draw[thick,->] (0,0) -- (4.5,0)node[anchor=north west] {$k$};
\draw[thick,->] (0,0) -- (0,4.5);
\draw [thick] (0.2,3) .. controls (2,0) .. (4,4);
\end{tikzpicture}
 \caption{$j_0>0$,\newline $\Pi_0 \le \Pi_{n-1}$}
 \end{subfigure}
\end{figure}
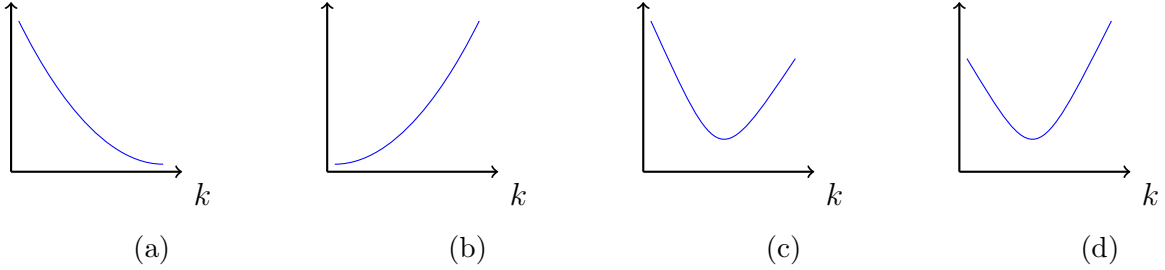

\noindent Formally, we have that
\begin{equation}\label{eq:lessthansmallerthan}
 \frac{g_0}{d} \ge \frac{g_0}{d} \cdot \frac{s_1g_1}{d} \ge \cdots
 \ge
 \prod_{i=0}^{j_0-1}\frac{s_ig_i}{d}
 \le
 \prod_{i=0}^{j_0}\frac{s_ig_i}{d}
 \le \dots \le
 \prod_{i=0}^{n-1}\frac{s_ig_i}{d}.
\end{equation}
Then the largest value of $\prod^{k-1}_{i=0} s_ig_id^{-1}$ must be $\max\{g_0d^{-1},\prod_{i=0}^{n-1}\frac{s_ig_i}{d}\}$. Since, by \eqref{eq:gidi=1}, $\prod_{i=0}^ng_id^{-1}=1$ and $g_i=|g_i|_p^{-1}$ we obtain \eqref{eq:the_max_product_gid}. If $j_0$ does not exist, then we just have the left part of \eqref{eq:lessthansmallerthan} so that the maximal value is $g_0d^{-1}$ and we again obtain \eqref{eq:the_max_product_gid}.
\end{proof}

We now specialise Proposition~\ref{prop:ordering} further for the specific values of $|g_i|_p$ and $d$ in the two cases considered in Section~\ref{themaph}. We start with Case 1.

\begin{corr} \label{corr:product_of_dg_i_is_bigger_than_1}
Let $C_2>1$, $0<\delta_*<1$, and $\xi_0,\dots,\xi_n,Q$ be integer powers of $p$ and \eqref{xiorder}, \eqref{xiorder+} and \eqref{eqn2.6} hold, where $0<v<1$.
Let $0 \le j \le n$ and $d$ and $g_i$ for $0 \le i \le n$ be given by \eqref{defn:g_i} and \eqref{defn:d}. Then for every $1\le k \le n$\begin{equation}\label{eq:bigger_than_1}
 \prod_{i=0}^{k-1}d|g_i|_p \ge Q^v\delta_*^{4n+2}C_2^{-2n-1}\,.
 \end{equation}
\end{corr}
\begin{proof}
Using \eqref{defn:g_i} and \eqref{defn:d} with $\delta_i^j$ defined by equation \eqref{eq:delta_def} it can be easily seen that
 \begin{align}
 \frac{1}{d|g_0|_p} \label{eqn3.14} &=
 \begin{cases}
 \dfrac{C_2Q\xi_0\delta_*^{2(n+1)}}{\delta_*^2C_2^{n+1}}
 & \text{if $j=0$},\\[3ex]
 \dfrac{C_2Q\xi_0}{\delta_*^2} & \text{otherwise}.
 \end{cases}
 \stackrel{\eqref{xiorder+}}{\le} \begin{cases}
 \dfrac{\delta_*^{2n}}{Q^vC_2^{n}}
 & \text{if $j=0$},\\[3ex]
 \dfrac{C_2}{Q^v\delta_*^2} & \text{otherwise}.
 \end{cases}
 \\[2ex]
 d|g_n|_p \label{eqn3.15}&=
 \begin{cases}
 \dfrac{\delta_*^2C_2^{n+1}}{C_2Q\xi_n\delta_*^{2(n+1)}}
 &\text{if $j=n$},\\[3ex]
 \dfrac{\delta_*^2}{C_2Q\xi_n} &\text{otherwise}.
 \end{cases}
 \stackrel{\xi_n=1}{=}
 \begin{cases}
 \dfrac{C_2^{n}}{Q \delta_*^{2n}}
 &\text{if $j=n$},\\[2ex]
 \dfrac{\delta_*^2}{C_2Q} &\text{otherwise}.
 \end{cases}
 \end{align}
Then, by \eqref{xiorder}, inequalities \eqref{eq3.6} are fulfilled with $(s_1,\dots,s_n)=(1,\dots,1)$ if $j=0$ and with
$$
(s_1,\dots,s_n)=(\underbrace{1,\dots,1}_{j-1},\delta_*^{-2(n+1)}C_2^{n+1},\underbrace{1,\dots,1}_{n-j})\quad\text{if $j>0$}\,.
$$
Combining \eqref{eqn3.14} and \eqref{eqn3.15} with Proposition~\ref{prop:ordering} and using the fact that $0<\delta_*<1$, $0<v<1$ and $C_2>1$ we obtain that
\begin{equation*}
 \left(\prod^{k-1}_{i=0}d|g_i|_p\right)^{-1} \le
 \max
 \left\{
 \frac{C_2}{Q^v\delta_*^2},
 \frac{C_2^{n}}{Q\delta_*^{2n}} \prod_{i=1}^{n-1}s_i
 \right\}\le \frac{C_2^{2n+1}}{Q^v\delta_*^{4n+2}}\,,
\end{equation*}
implying \eqref{eq:bigger_than_1}, as required.
\end{proof}

The following statement is an analogue of Corollary~\ref{corr:product_of_dg_i_is_bigger_than_1} for Case~2.

\begin{corr} \label{corr:product_of_dg_i_is_bigger_than_1-w}
Let $0<\delta_*<1$, and $\xi_0,\dots,\xi_n,Q$ be integer powers of $p$ and \eqref{xiorder}, \eqref{xiorder+} and \eqref{eqn2.6} hold, where $0<v<1$.
Let $d$ and $g_i$, for $0 \le i \le n$, be given by \eqref{defn:g_iQ} and \eqref{defn:dQ}. Then for every $1\le k \le n$
\begin{equation}\label{eq:bigger_than_1-2}
 \prod_{i=0}^{k-1}d|g_i|_p \ge Q^v\,.
 \end{equation}
\end{corr}

\begin{proof}
The proof of this is similar to that of Corollary~\ref{corr:product_of_dg_i_is_bigger_than_1}.
Using \eqref{defn:g_iQ} and \eqref{defn:dQ} it can be easily seen that
 \begin{align}
 \frac{1}{d|g_0|_p} \label{eqn3.14adjust} &=
 \xi_0Q
 \le Q^{-v}
 \\
 d|g_n|_p \label{eqn3.15adjust}&=
 \frac{1}{Q\xi_n}
 =
 Q^{-1}
 \end{align}
Then, by \eqref{xiorder}, inequalities \eqref{eq3.6} are fulfilled with $(s_1,\dots,s_n)=(1,\dots,1)$.
Combining \eqref{eqn3.14adjust} and \eqref{eqn3.15adjust} with Proposition~\ref{prop:ordering} we obtain that
\begin{equation*}
 \left(\prod^{k-1}_{i=0}d|g_i|_p\right)^{-1} \le
 \max
 \left\{
 \frac{1}{Q^v},
 \frac{1}{Q}
 \right\}=\frac{1}{Q^v}\,,
\end{equation*}
implying \eqref{eq:bigger_than_1-2}, as required.
\end{proof}

Now we embark upon verifying the conditions of Corollary~\ref{mine} for $h$ given by \eqref{h1_Matrix}--\eqref{h2_Matrix}.
{\colorblue{}Following \cite[\S7.3]{flows}, we use the standard basis of $\bigwedge^k(\Q_S^{n+1})$ given by
\begin{equation}\label{standard_basis}
\mathbf{e}_I := \mathbf{e}_{i_1} \wedge \cdots \wedge \mathbf{e}_{i_k} \quad \text{with } I = \{i_1 < \dots < i_k\} \subset \{0, \dots, n\},
\end{equation}
where $\mathbf{e}_0,\dots,\mathbf{e}_n$ is the standard basis of $\Q_S^{n+1}$. In particular, any $\vv w\in\bigwedge^k(\Q_S^{n+1})$ can be written as 
$$
\mathbf{w}=\sum_{I = \{i_1 < \dots < i_k\} \subset \{0, \dots, n\}} w_I \mathbf{e}_I,
$$
where $w_I=(w_I^{(v)})_{v\in S}\in\Q_S$ is the $\mathbf{e}_I$-coordinate of $\vv w$. Note that the coordinates $w_I$ have components $w^{(v)}_I$ corresponding to each $v\in S$. In the following statement (Proposition~\ref{lem:determinant is non-zero}) we provide the description of these components -- see \eqref{eq5.29} and \eqref{eqn3.17} below -- for the multivectors of the form 
\begin{equation}\label{ha}
h(x)\vv a_1\wedge\dots \wedge h(x)\vv a_k
\end{equation}
where $\vv a_1,\dots,\vv a_k\in \Z_S^{n+1}$ is a basis of some $\Z_S$-module, $S=\{p,\infty\}$ and $h=(h_1,h_2)$ is given by \eqref{h1_Matrix}--\eqref{h2_Matrix}.
In view of the definitions given in \S\ref{KT0}, in particular \eqref{eqn3.4+} and \eqref{eqn3.4}, Proposition~\ref{lem:determinant is non-zero} is a crucial step for verifying conditions (1) and (2) of Corollary~\ref{mine}.}

\begin{prop}\label{lem:determinant is non-zero}
Let $\Delta \in \mathfrak{B}(\Z_S,n+1)$ and $\vv a_1,\dots,\vv a_k$ be a basis of $\Delta$, let $h_1$ and $h_2$ be given by \eqref{h1_Matrix} and \eqref{h2_Matrix} respectively. Then
\begin{equation}\label{eq5.29}
h_2\vv a_1\wedge\dots\wedge h_2\vv a_k=d^k(\vv a_1\wedge\dots\wedge \vv a_k).
\end{equation}
{\colorblue{}Furthermore, let $l$ be the smallest integer such that $p^l(\vv a_1\wedge\dots\wedge\vv a_k)$ has integer coordinates. Then for every
$I=\{i_1<\dots<i_k\}\subset\{0,\dots,n\}$ the $\vv e_I$}-coordinate of $h_1(x)\vv a_1\wedge\dots\wedge h_1(x)\vv a_k$ in the standard basis \eqref{standard_basis} is 
\begin{equation}\label{eqn3.17}
 \left(\prod_{i\in I}g_{i}\right) p^{-l}R_I(x)
\end{equation}
{\colorblue{}for some polynomial $R_I\in\Z[X]$} of degree $\le M=\left[(\frac{n+1}{2})^2\right]$ and height
\begin{equation}\label{eqn3.21}
H(R_I)\ll\|p^l(\vv a_1\wedge\dots\wedge\vv a_k)\|_\infty.
\end{equation}
Moreover, the polynomial $R_I$ is non-zero when $I=\{0,\dots,k-1\}$.
\end{prop}

\begin{proof}
First, we note that \eqref{eq5.29} is an immediate consequence of the fact that $h_2\vv a_i=d\vv a_i$ for every $i$. Now, consider the matrix
\begin{equation}
 A=\begin{pmatrix}
 a_{0,1} & a_{0,2} & \cdots & a_{0,k}\\
 a_{1,1} & a_{1,2} & \cdots & a_{1,k}\\
 \vdots & \vdots & \ddots & \vdots \\
 a_{n,1} & a_{n,2} & \cdots & a_{n,k}\\
 \end{pmatrix}
\end{equation}
of the coordinates of $\vv a_1,\dots,\vv a_k$. Then, the coordinates of $h_1(x)\vv a_1\wedge\dots\wedge h_1(x)\vv a_k$ in the standard basis are the determinants $\det(h_{1,I}(x)A)$, where $I=\{i_1<\dots<i_k\}\subset\{0,\dots,n\}$ and $h_{1,I}(x)$ is the matrix composed of the rows $i_1,\dots,i_k$ from $h_1(x)$.

When $I=\{0, \dots, k-1\}$, it is readily seen that
\begin{align}
&\det\big(h_{1,I}(x)A\big)=\nonumber\\ &\;\;=\det\begin{pmatrix}\label{eq:RHS of h_1(x)Delta}
 g_0 P_1(x) & g_0 P_2(x) & \cdots & g_0 P_k(x)\\
 g_1 P'_1(x) & g_1 P'_2(x) & \cdots & g_1 P'_k(x)\\
 \vdots & \vdots & \ddots & \vdots \\
 \frac{g_{k-1}}{(k-1)!}P^{(k-1)}_1(x) & \frac{g_{k-1}}{(k-1)!}P^{(k-1)}_2(x) & \cdots & \frac{g_{k-1}}{(k-1)!}P^{(k-1)}_k(x)\\
 \end{pmatrix}\,,
\end{align}
where $P_j(x)=\sum_{i=0}^n a_{i,j}x^i$. It can be easily seen that the right hand-side of \eqref{eq:RHS of h_1(x)Delta} is a constant times the Wronskian of $P_1,\dots,P_k$ so we know it is non-zero. This follows from the fact that $P_1,\dots,P_k$ are linearly independent over $\R$, and this is because $\vv a_1,\dots,\vv a_k\in(\Z[\frac1p])^{n+1}$ are linearly independent vectors.

By the Laplace identity \cite[p.~105]{Schmidt-book}, we also have that
\begin{equation}\label{eqn3.23}
 \det\big(h_{1,I}(x)A\big)=(g_{i_1}\vv r_{i_1} \wedge \dots \wedge g_{i_k} \vv r_{i_k}) \cdot (\vv a_1 \wedge \dots \wedge\vv a_k)\,,
\end{equation}
where $\vv r_i$ is the $i$-th row of $h_1(x)$.
Expanding $\vv r_{i_1} \wedge \dots \wedge \vv r_{i_k}$ in the standard basis, we get a vector of $N= \binom{n+1}{k}$ polynomials, say $\hat Q_1,\dots,\hat Q_N\in\Z[X]$, of degree at most
$$
n+\dots+(n+1-k)-1-\dots-(k-1)
\le \textstyle \left[(\frac{n+1}{2})^2\right]=M\,.
$$
Then we can write $\hat Q_j(x)=\sum_{i=0}^{M} \hat{q}_{i,j}x^i$ for
$1 \le j\le N$ where $\hat{q}_{i,j}\in \Z$ depend only on $n$ and $k$. In turn, we can write $\vv a_1 \wedge \dots \wedge \vv a_k=(\hat a_1,\dots,\hat a_N)$ in the standard basis, where $\hat{a}_j \in \Z\big[\frac{1}{p}\big]$ for each $j$. By definition, $l$ is the smallest integer such that
\begin{equation}\label{eq:ahats}
 (\hat b_1,\dots,\hat b_N):=p^l(\hat{a}_1, \dots, \hat{a}_N)\in\Z^N\,.
\end{equation}
Hence, by \eqref{eqn3.23} and \eqref{eq:ahats},
\begin{equation}
\begin{aligned}
\det \big(h_{1,I}(x)A\big) &=
 \left(\prod_{i\in I} g_i\right)(\hat Q_1(x) ,\dots , \hat Q_N(x))\cdot(\hat{a}_1, \dots, \hat{a}_N) \\
&=
 \left(\prod_{i\in I}g_i\right)p^{-l}(\hat Q_1(x) ,\dots , \hat Q_N(x))\cdot(\hat{b }_1, \dots, \hat{b}_N) \\
&= \left(\prod_{i\in I}g_i\right)p^{-l}\sum_{j=1}^N \hat{b}_j \hat{Q}_j(x) \\
 &= \left(\prod_{i\in I}g_i\right) p^{-l}\sum_{j=1}^N \hat{b}_j \sum_{i=0}^{M} \hat{q}_{i,j}x^i \\
 &= \left(\prod_{i\in I}g_i\right) p^{-l}\sum_{i=0}^M c_ix^i,\quad\text{where }c_i:=\sum_{j=1}^N \hat{b}_j \hat{q}_{i,j}\,.
\end{aligned}
\end{equation}
Define the polynomial
\begin{equation}\label{eq:R(x)}
 R_I(X):=\sum_{i=0}^M c_iX^i\,.
\end{equation}
Clearly $R_I \in \Z[X]$.
Finally, it can be easily seen that
\begin{equation*}
 |c_i| \le
 \sum_{j=1}^N \left|\hat{b}_j \hat{q}_{i,j}\right| \ll_{n} \max_j |\hat{b}_j|
 = \|(\hat b_1,\dots,\hat b_N)\|_\infty=\|p^l(\vv a_1\wedge\dots\wedge\vv a_k)\|_\infty\,,
\end{equation*}
whence \eqref{eqn3.21} follows.
\end{proof}

\begin{prop}\label{lem:determinant is non-zero - together}
Let $\Delta \in \mathfrak{B}(\Z_S,n+1)$ be of rank $k$, and $h_1$ and $h_2$ be given by \eqref{h1_Matrix} and \eqref{h2_Matrix}.
Then
\begin{equation}\label{eq:Bound_on_cov(h(x)Delta)}
 \cov(h(x)\Delta) \gg
 \left(\prod_{i=0}^{k-1}d|g_i|_p\right) |\tilde R(x)|_p
 \end{equation}
for some $\tilde R\in\Z_S[x]$ such that
 \begin{equation}\label{eq3.32}
 \tilde R=\sum_{i=0}^M \tilde c_ix^i\quad\text{with}\quad
 \max_i |\tilde c_i|_p=1\,.
 \end{equation}
\end{prop}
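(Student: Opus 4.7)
The plan is to unfold the covolume via the content-of-wedge formula \eqref{eqn3.4} and then extract a single convenient coordinate of the resulting multivector. Fix any $\Z_S$-basis $\vv a_1,\dots,\vv a_k$ of $\Delta$. Since $S=\{p,\infty\}$, the content factors as a product of two norms, giving
\begin{equation*}
\cov(h(x)\Delta)=\bigl\|h_1(x)\vv a_1\wedge\cdots\wedge h_1(x)\vv a_k\bigr\|_p\,\bigl\|h_2(x)\vv a_1\wedge\cdots\wedge h_2(x)\vv a_k\bigr\|_\infty\,.
\end{equation*}
Proposition~\ref{lem:determinant is non-zero} identifies the Archimedean factor as $d^k\|\vv a_1\wedge\cdots\wedge \vv a_k\|_\infty = d^k p^{-l}\|p^l(\vv a_1\wedge\cdots\wedge\vv a_k)\|_\infty$, and lists the coordinates of the $p$-adic multivector in the standard basis as $(\prod_{i\in I}g_i)p^{-l}R_I(x)$, where $I\subset\{0,\dots,n\}$ runs over subsets of size $k$.

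To get a lower bound on the $p$-adic sup-norm I would keep only the coordinate corresponding to $I_0:=\{0,1,\dots,k-1\}$, for which $R_{I_0}\in\Z[x]$ is guaranteed to be non-zero by Proposition~\ref{lem:determinant is non-zero}. Since each $g_i$ is a power of $p$ with $g_i=|g_i|_p^{-1}$, this coordinate has $p$-adic absolute value $(\prod_{i=0}^{k-1}|g_i|_p)\,p^l\,|R_{I_0}(x)|_p$. Multiplying the two factors, the $p^l$ and $p^{-l}$ cancel and I arrive at
\begin{equation*}
\cov(h(x)\Delta)\ge \Bigl(\prod_{i=0}^{k-1}d|g_i|_p\Bigr)\,\bigl\|p^l(\vv a_1\wedge\cdots\wedge\vv a_k)\bigr\|_\infty\,|R_{I_0}(x)|_p\,.
\end{equation*}

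Finally, I would normalize $R_{I_0}$ to match the form required in \eqref{eq3.32}. Writing $R_{I_0}=p^m\widetilde R$, where $m\ge 0$ is the minimum $p$-adic valuation of the integer coefficients of $R_{I_0}$, I obtain $\widetilde R\in\Z[x]\subset\Z_S[x]$ satisfying $\max_j|\tilde c_j|_p=1$ by construction, together with $|R_{I_0}(x)|_p=p^{-m}|\widetilde R(x)|_p$ and $H(R_{I_0})=p^m H(\widetilde R)\ge p^m$. Combining the latter with the height bound $H(R_{I_0})\ll\|p^l(\vv a_1\wedge\cdots\wedge\vv a_k)\|_\infty$ from \eqref{eqn3.21} and substituting into the displayed inequality cancels the remaining factor $p^m$ and yields \eqref{eq:Bound_on_cov(h(x)Delta)}. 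The only piece of care in this argument is the $p^l$-bookkeeping between the two factors of the content and the subsequent $p^m$-cancellation coming from the normalization of $R_{I_0}$; once these are tracked cleanly the statement follows directly from Proposition~\ref{lem:determinant is non-zero}, so there is no real obstacle.
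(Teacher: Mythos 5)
Your argument is correct and follows essentially the same path as the paper's: apply Proposition~\ref{lem:determinant is non-zero} with $I=\{0,\dots,k-1\}$, combine the two content factors so the $p^{\pm l}$ cancel, then rescale $R_{I}$ so its coefficients have $p$-adic sup-norm~$1$ and absorb the resulting factor into the Archimedean norm via the height bound \eqref{eqn3.21}. The only cosmetic difference is the normalization bookkeeping: where the paper sets $\widetilde C=\max_j|c_j|_p$ and invokes $|c_j|_p|c_j|\ge1$, you write $R_{I_0}=p^m\widetilde R$ and use $H(\widetilde R)\ge 1$; these are the same step in two notations.
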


\begin{proof}
Using the same notation as in the proof of Proposition~\ref{lem:determinant is non-zero}, let $I=\{0,\dots,k-1\}$, where $k=\operatorname{rank}\Delta$. {\colorblue{}Let $\vv a_1,\dots,\vv a_k$ be a $\Z[\frac1p]$-basis of $\Delta$. Then,} by Proposition~\ref{lem:determinant is non-zero}, \eqref{eqn3.4+} and \eqref{eqn3.4}, we have that
\begin{equation}\label{eqnn3.32}
\begin{aligned}
 \operatorname{cov}(h(x)\Delta) &\ge |\det(h_{1,I}(x)A)|_p\cdot \|d^k(\vv a_1 \wedge \dots \wedge \vv a_k)\|_\infty,
 \\
 &= \left|\left(\prod_{i=0}^{k-1}g_i\right) p^{-l}R_I(x)\right|_p \cdot
 \|d^kp^{-l}p^l(\vv a_1 \wedge \dots \wedge \vv a_k)\|_\infty,
 \\
 &= \left(\prod_{i=0}^{k-1}d|g_i|_p\right) \left|R_I(x)\right|_p \|p^l(\vv a_1 \wedge \dots \wedge \vv a_k)\|_\infty.
\end{aligned}
\end{equation}
As in the proof of Proposition~\ref{lem:determinant is non-zero}, let $c_j$ denote the coefficients of $R_I$, so that $R_I$ is given by \eqref{eq:R(x)}. Let $\tilde C=\max_j|c_j|_p$. {\colorblue{}Clearly $\tilde C$ is an integer power of $p$ and therefore $|\tilde C|_p=\tilde C^{-1}$.} Define
$$
\tilde R(X):=R_I(X)\tilde C=\sum_{i=0}^M \tilde c_iX^i\,,\quad\text{where }\tilde c_i=c_i\tilde C\,.
$$
Note that
\begin{equation}
 \max_i |\tilde c_i|_p=\max_i |c_i\tilde C|_p = \max_i|c_i|_p\tilde C^{-1}=1\,.
\end{equation}
Since $|c_i|_p|c_i|\ge1$, we have that $|c_i|_p\|\vv c\|_\infty=|c_i|_pH(R_I)\ge1$. Therefore, $\tilde C H(R_I)\ge1$ and, by \eqref{eqn3.21}, we get that
\begin{equation}\label{eqnn3.34}
\tilde C\cdot\|p^l(\vv a_1 \wedge \dots \wedge \vv a_k)\|_\infty\gg 1\,.
\end{equation}
Observe that
\begin{equation}\label{eqnn3.35}
 |R(x)|_p = \left|\tilde R(x)\tilde C^{-1} \right|_p
 = \left| \tilde{R}(x)\right|_p \cdot \tilde C\,.
\end{equation}
Then using \eqref{eqnn3.32},\eqref{eqnn3.34} and \eqref{eqnn3.35} we obtain that
\begin{align*}
 \operatorname{cov}(h(x)\Delta)
 &\ge
 \left(\prod_{i=0}^{k-1}d|g_i|_p\right) |\tilde{R}(x)|_p \cdot \tilde C \cdot
 \|p^l(\vv a_1\wedge\dots\wedge\vv a_k)\|_\infty\\[1ex]
 & \gg \left(\prod_{i=0}^{k-1}d|g_i|_p\right) |\tilde R(x)|_p
\end{align*}
as required.
\end{proof}

\begin{prop}\label{lem:specifc h map}
Let $\delta_*$, $Q$, $\xi_0,\dots,\xi_n$, $g_0,\dots,g_n$, $d$ be as in Corollary~\ref{corr:product_of_dg_i_is_bigger_than_1} or Corollary~\ref{corr:product_of_dg_i_is_bigger_than_1-w}. Let $\rho=1$ and $\alpha = M^{-1}$, where $M=[\left(\frac{n+1}{2}\right)^2]$.
Then, for any non-empty ball $B\subset\Z_p$ and all sufficiently large $Q$,
the map $h$ given by \eqref{h1_Matrix}--\eqref{h2_Matrix} satisfies the conditions stated in Corollary \ref{mine}, in which $C>0$ depends on $n$ only.
\end{prop}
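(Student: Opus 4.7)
The plan is to deduce both conditions of Corollary~\ref{mine} from the structural results already established in Propositions~\ref{lem:determinant is non-zero} and \ref{lem:determinant is non-zero - together}. These give, for any $\Delta\in\mathfrak{B}(\Z_S,n+1)$ of rank $k$ with $\Z_S$-basis $\vv a_1,\dots,\vv a_k$, an exact expression
\[
\cov(h(x)\Delta)\;=\;d^{k}\|\vv a_1\wedge\dots\wedge\vv a_k\|_\infty\cdot p^{l}\cdot\max_{|I|=k}\Bigl(\prod_{i\in I}|g_i|_p\Bigr)|R_I(x)|_p,
\]
where $I$ ranges over $k$-subsets of $\{0,\dots,n\}$, each $R_I\in\Z[x]$ has degree at most $M$, and the leading factor is independent of $x$.

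For condition~(1), each $|R_I(\cdot)|_p$ is $(C,1/M)$-good on $\Q_p$ by Lemma~\ref{lem:polys_are_good}, with $C$ depending only on $n$. A brief general argument shows that the maximum of finitely many $(C,\alpha)$-good functions is again $(C,\alpha)$-good with the same constant: for any sub-ball $B'\subset\tilde B$, pick the index $I^\ast$ attaining $\sup_{B'}(\prod_{i\in I}|g_i|_p)|R_I|_p$, and observe that the sub-level set of the maximum is contained in that of this one term. Since multiplication by positive constants preserves $(C,\alpha)$-goodness, this yields that $\cov(h(\cdot)\Delta)$ is $(C,1/M)$-good on $\tilde B$.

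For condition~(2) I split on the rank $k$ of $\Delta$. When $k=n+1$, the determinant calculation shows that $\cov(h(x)\Delta)$ is independent of $x$ and equals $(\prod_{i=0}^{n}|g_i|_p)\,d^{n+1}\,\cov(\Delta)$; the first two factors collapse to $1$ by \eqref{eq:gidi=1}, while $\cov(\Delta)=c(\det(\vv a_1,\dots,\vv a_{n+1}))=1$ because $\Delta$ is a primitive full-rank submodule of $\Z_S^{n+1}$, so its determinant lies in $\Z_S^{\ast}=\{\pm p^{j}:j\in\Z\}$, an element of content $1$. Hence the bound with $\rho=1$ holds trivially in this range. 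When $1\le k\le n$, Proposition~\ref{lem:determinant is non-zero - together} combined with Corollary~\ref{corr:product_of_dg_i_is_bigger_than_1} (or~\ref{corr:product_of_dg_i_is_bigger_than_1-w}) yields
\[
\sup_{x\in B}\cov(h(x)\Delta)\;\gg\;Q^{v}\sup_{x\in B}|\widetilde R(x)|_p,
\]
where $\widetilde R\in\Z_p[x]$ has degree $\le M$ and $\max_j|\widetilde c_j|_p=1$, the implied constant depending only on $n$ and $p$.

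The remaining step, which is the main technical point, is to establish a uniform lower bound $\sup_{x\in B}|\widetilde R(x)|_p\ge c_B>0$ valid for all admissible $\widetilde R$ on the fixed ball $B$; once this is in hand, the claim $\sup_{x\in B}\cov(h(x)\Delta)\ge 1$ follows by choosing $Q$ large enough (depending on $B$, $n$, $p$, $v$) to compensate for $c_B$. For this I use a compactness argument rather than an explicit Gauss-norm identity, which is unavailable here because $M$ can exceed $p$: the set of polynomials in $\Z_p[x]$ of degree $\le M$ with $\max_j|\widetilde c_j|_p=1$ is a closed, hence compact, subset of $\Z_p^{M+1}$, and the continuous map $\widetilde R\mapsto\sup_{x\in B}|\widetilde R(x)|_p$ takes only strictly positive values since a nonzero polynomial of degree $\le M$ has at most $M$ roots and so cannot vanish on the infinite set $B$. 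Hence it attains a positive minimum $c_B$, completing the verification of both conditions.
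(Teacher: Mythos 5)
Your proposal is correct and follows essentially the same route as the paper: the $(C,\alpha)$-goodness of $\cov(h(\cdot)\Delta)$ via Proposition~\ref{lem:determinant is non-zero} and Lemma~\ref{lem:polys_are_good} (the paper cites \cite[Lemma~3.1]{flows} where you inline the max-of-good-functions argument), the split on $\operatorname{rank}\Delta$ with the trivial $k=n+1$ case, and, crucially, the same compactness argument over $\{\tilde{\vv c} : \|\tilde{\vv c}\|_p=1\}$ to extract a positive constant $c_B$ (the paper's $\tilde\rho$) which $Q^v$ then absorbs.
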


\begin{proof}
The validity of condition~(1) in Corollary~\ref{mine} follows from Lemma~\ref{lem:polys_are_good}. Indeed, by Proposition~\ref{lem:determinant is non-zero} and the definition of $\operatorname{cov}(h(\cdot)\Delta)$, the function $\operatorname{cov}(h(\cdot)\Delta)$ is the maximum of $p$-adic absolute values of polynomials in one variable of degree at most $M$, and therefore, by Lemma~\ref{lem:polys_are_good} and \cite[Lemma 3.1]{flows}, it is $(C,\alpha)$ good for $\alpha=M^{-1}$ and some $C>0$ depending only on $M$. Thus, ultimately $C$ depends on $n$ only.

Now we verify condition (2) in Corollary~\ref{mine}. Fix any non-empty ball $B\subset\Z_p$. If $k=n+1$ then, since $\prod_{i=0}^n(d|g_i|_p)=1$, using the explicit form of $h_1$ and $h_2$ given by \eqref{h1_Matrix} and \eqref{h2_Matrix} one readily verifies that $\cov(h(x)\Delta)=1\ge\rho$. Indeed, since $\Delta$ has rank $n+1$ and is primitive, the standard basis $\vv e_0,\dots,\vv e_n$ is a basis of $\Delta$. Then $\|h_1(x)\vv e_1\wedge\dots\wedge h_1(x)\vv e_{n+1}\|_p=\prod_{i=0}^n|g_i|_p$ and $\|h_2(x)\vv e_1\wedge\dots\wedge h_2(x)\vv e_{n+1}\|_\infty=d^{n+1}$. Then
\begin{align*}
\cov(h(x)\Delta)&=\|h_1(x)\vv e_1\wedge\dots\wedge h_1(x)\vv e_{n+1}\|_p\times\\ &\times\|h_2(x)\vv e_1\wedge\dots\wedge h_2(x)\vv e_{n+1}\|_\infty
=\prod_{i=0}^n(d|g_i|_p)=1\,,
\end{align*}
as claimed above.

Naturally, for the rest of the proof we will assume that $1\le k\le n$. By \eqref{eq:Bound_on_cov(h(x)Delta)} we have that
\begin{equation}\label{eqn3.37}
 \|\cov(h(x)\Delta)\|_B \gg
 \left(\prod_{i=0}^{k-1}d|g_i|_p\right) \sup_{x\in B}|\tilde R_{\tilde{\vv c}}(x)|_p\,,
 \end{equation}
where $\tilde{\vv c}=(\tilde c_0,\dots,\tilde c_M)\in\Z[\frac1p]^{M+1}$ and $\tilde R_{\tilde{\vv c}}$ satisfies \eqref{eq3.32}.
Define
\begin{equation}\label{tilderho}
 \tilde{\rho}:=\inf_{\|\tilde{\vv c}\|_p=1}\sup_{x \in B} |\tilde R_{\tilde{\vv c}}(x)|_p\,.
\end{equation}
Clearly $\tilde{\rho}$ is a constant depending on $k$, $n$, $p$ and $B$ only. Since $B$ is non-empty, we have that for every choice of $\tilde{\vv c}\in\Q_p^{M+1}$ with $\|\tilde{\vv c}\|_p=1$ we have that
\begin{equation}\label{eqn3.39}
\sup_{x\in B}|\tilde R_{\tilde{\vv c}}(x)|_p
\end{equation}
is strictly positive.
Also, since for every fixed $x\in\Q_p$, $\tilde R_{\tilde{\vv c}}(x)$ is a linear function of $\tilde{\vv c}$, we have that \eqref{eqn3.39} depends on $\tilde{\vv c}$ continuously. Since the set of $\tilde{\vv c}\in\Q_p^{M+1}$ subject to $\|\tilde{\vv c}\|_p=1$ is compact, we conclude that $\tilde \rho$, given by \eqref{tilderho}, is strictly positive.

Now, combining \eqref{eqn3.37} and \eqref{tilderho}, and using Corollary~\ref{corr:product_of_dg_i_is_bigger_than_1} and Corollary~\ref{corr:product_of_dg_i_is_bigger_than_1-w} together with the facts that $\delta_*\le1$ and $C_2\ge1$, we obtain that
$$
\|\cov(h(x)\Delta)\|_B \gg Q^v\delta_*^{4n+2}C_2^{-2n-1}\tilde\rho\,,
$$
where the implied constant depends on $n$ only. Therefore, since $\delta_*$, $C_2$ and $\tilde\rho$ do not depend on $Q$, we have that
$$
\|\cov(h(x)\Delta)\|_B\ge\rho=1
$$
provided that $Q$ is sufficiently large.
\end{proof}

Combining Proposition~\ref{lem:specifc h map} with Corollary~\ref{mine} with any $0<\rho<1$ we obtain the following

\begin{corr}\label{aux}
Let $n\ge2$ be an integer, $p$ be a prime number, $\mu$ be Haar measure on $\Q_p$. Let $\delta_*$, $Q$, $v$, $\xi_0,\dots,\xi_n$, $g_0,\dots,g_n$, $d$ be as in Corollary~\ref{corr:product_of_dg_i_is_bigger_than_1} or Corollary~\ref{corr:product_of_dg_i_is_bigger_than_1-w}. Let $\alpha = [\left(\frac{n+1}{2}\right)^2]^{-1}$ and $h$ be given by \eqref{h1_Matrix}--\eqref{h2_Matrix}.
Then there exists a constant $K>0$ depending on $n$ and $p$ only satisfying the following statement. For any non-empty ball $B\subset\Z_p$ there exists $Q_0=Q_0(B,n,p,v,C_2)$ such that for all $Q\ge Q_0$ and $\varepsilon>0$ one has that
 \begin{equation} \label{eq:auxstatement}
 \mu \left( \{x \in B : \delta(h(x)\Z_S^{n+1})\le\varepsilon\}\right)
 \le
 K\varepsilon^\alpha \mu(B).
 \end{equation}
\end{corr}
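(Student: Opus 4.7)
The plan is essentially bookkeeping: the heavy lifting has been done in Proposition~\ref{lem:specifc h map} (which verifies the two hypotheses of the quantitative non-divergence estimate for our specific map $h$) and in Corollary~\ref{mine} (which is the quantitative non-divergence estimate itself). All that remains is to chain these together and tidy the constants.

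First, I would apply Proposition~\ref{lem:specifc h map} not to $B$ but to the enlarged ball $\widetilde B = B(x_0, 3^{n+1} r)$, since hypothesis~(1) of Corollary~\ref{mine} concerns the $(C,\alpha)$-good property on $\widetilde B$. The map $h$ of \eqref{h1_Matrix}--\eqref{eq:full_h_map} is given by explicit matrix formulas defined on all of $\Q_p$, so the extension to $\widetilde B$ is automatic, and Proposition~\ref{lem:specifc h map} is stated for an arbitrary non-empty ball. Taking $\rho = 1$ and $\alpha = M^{-1}$ with $M = [((n+1)/2)^2]$, this yields some $Q_0 = Q_0(B, n, p, v, C_2)$ (the dependence on $\widetilde B$ being the same as the dependence on $B$) such that for every $Q \ge Q_0$ both conditions of Corollary~\ref{mine} hold, with the constant $C$ in (1) depending only on $n$.

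Next, I would invoke Corollary~\ref{mine} directly, obtaining
\begin{equation*}
\mu\bigl(\{x \in B : \delta(h(x)\Z_S^{n+1}) < \varepsilon\}\bigr) \le C(n+1)(3p)^{2(n+1)} \varepsilon^{\alpha}\,\mu(B)
\end{equation*}
for every $0 < \varepsilon \le 1$. Set $K_0 := C(n+1)(3p)^{2(n+1)}$, which depends on $n$ and $p$ only. For $\varepsilon > 1$ the inequality in \eqref{eq:auxstatement} is trivial, since the left-hand side is at most $\mu(B)$ while $\varepsilon^\alpha > 1$; it therefore suffices to take $K := \max\{1, K_0\}$, which again depends on $n$ and $p$ only, to cover all $\varepsilon > 0$ simultaneously.

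I do not anticipate any serious obstacle: the only points that require mild care are (i) checking that enlarging $B$ to $\widetilde B$ does not alter the dependence structure of $Q_0$ (it does not, since $\widetilde B$ is determined by $B$) and (ii) ensuring that the extra constants $C_2$, $\delta$, $v$, $\widetilde\rho$ that appear inside Proposition~\ref{lem:specifc h map} are absorbed into the threshold $Q_0$ rather than leaking into $K$. The latter is precisely what the ``provided $Q$ is sufficiently large'' clause at the end of Proposition~\ref{lem:specifc h map} was designed to accomplish, so the argument goes through without modification.
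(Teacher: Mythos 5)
Your overall route is the same as the paper's (combine Proposition~\ref{lem:specifc h map} with Corollary~\ref{mine}), but your opening move --- applying Proposition~\ref{lem:specifc h map} to the enlarged ball $\widetilde B$ rather than to $B$ --- rests on a misreading of the proposition and, taken literally, leaves hypothesis~(2) of Corollary~\ref{mine} unverified. Proposition~\ref{lem:specifc h map}, applied to a ball $B$, already asserts that $h$ satisfies \emph{both} conditions of Corollary~\ref{mine} for that same $B$: condition~(1), the $(C,\alpha)$-good property on $\widetilde B$, is handled internally, since Lemma~\ref{lem:polys_are_good} together with Proposition~\ref{lem:determinant is non-zero} give the good property on all of $\Q_p$ (so there is nothing special about $\widetilde B$ to arrange); and condition~(2) is proved there precisely in the form $\|\cov(h(\cdot)\Delta)\|_B \ge \rho$. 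You therefore need only apply the proposition to $B$ itself.

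If you instead apply Proposition~\ref{lem:specifc h map} to $\widetilde B$, what you obtain towards condition~(2) is $\|\cov(h(\cdot)\Delta)\|_{\widetilde B} \ge \rho$; since $B \subset \widetilde B$, the supremum over $\widetilde B$ dominates that over $B$, so this is \emph{weaker} than the required $\|\cov(h(\cdot)\Delta)\|_B \ge \rho$ and does not imply it. Your subsequent application of Corollary~\ref{mine} to $B$ is therefore not justified as written. One could salvage the detour by applying Corollary~\ref{mine} to $\widetilde B$ and then passing to $B$ via $B\subset\widetilde B$ and $\mu(\widetilde B) \le 3^{n+1}\mu(B)$, at the cost of an extra constant factor, but the cleaner route is simply to apply Proposition~\ref{lem:specifc h map} to $B$. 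Your remaining bookkeeping --- the constant $K = C(n+1)(3p)^{2(n+1)}$ depending only on $n$ and $p$, the absorption of $\delta$, $C_2$, $v$ and $\tilde\rho$ into $Q_0$, and the explicit treatment of the trivial case $\varepsilon>1$ (which the paper leaves implicit) --- is all in order.
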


We remark that the constant $K$ appearing in \eqref{eq:auxstatement} is given by $$K=C(n+1)(3p)^{2(n+1)},$$ where $C$ arises from condition (2) of Corollary~\ref{mine} and, as established in Proposition~\ref{lem:specifc h map}, depends only on $n$ and $p$.

{\colorblue{}\subsection{Completion of the proof of Lemma~\ref{lem:aux_lemma}}

Let $h$ be given by \eqref{h1_Matrix}--\eqref{h2_Matrix}, where $d,g_0,\dots,g_n$ are given by either \eqref{defn:g_i}--\eqref{defn:d} or \eqref{defn:g_iQ}--\eqref{defn:dQ}. In view of the definition of the content $c(\cdot)$, given by \eqref{eqn3.4+}, covolume $\delta(\cdot)$, given by \eqref{eqn3.4}, and the fact that $\Z\subset\Z_S=\Z[\frac1p]$, we have that \eqref{h1a less than delta} implies that
$$
\delta(h(x)\Z_S^{n+1})\le\delta_*^2,
$$
where $\delta_*$ is given by either \eqref{delta*} or \eqref{delta*2} depending on whether we are in Case~1 or Case~2. In either case, by Proposition~\ref{prop:start of QND} and Proposition~\ref{prop:start of alt QND}, we have that the sets
$E_j(B;\delta_0)$ and $E(B;\varepsilon_0)$ are contained in 
$$
\{x \in B : \delta(h(x)\Z_S^{n+1})\le \delta_*^2\}
$$
for one of the choices of $d,g_0,\dots,g_n$ above. Consequently, by Corollary~\ref{aux}, we obtain that, in Case 1, for each $0\le j\le n$
\begin{equation}
 \mu(E_j(B;\delta_0))
 \le K\delta_*^{2\alpha} \mu(B)\;\stackrel{\eqref{delta*}}{=}\;
 K\delta_0^{\alpha/(n+1)}C_2^{\alpha} \mu(B)
\end{equation}
and, in Case 2, 
\begin{align}
 \mu(E(B; \varepsilon_0))
 \le K\delta_*^{2\alpha} \mu(B)\;\stackrel{\eqref{delta*2}}{=}\;
 K\varepsilon_0^{\alpha} \mu(B)\,,
\end{align}
for sufficiently large $Q$.

Choosing $\varepsilon_0$ and $\delta_0$ small enough so that
\begin{equation}\label{eps0}
\max\left\{K\varepsilon_0^{\alpha},\;K\delta_0^{\alpha/(n+1)}C_2^{\alpha}\right\} \le \frac{1-\kappa}{n+2}
\end{equation}
ensures that
$$
 \mu(E(B; \varepsilon_0))
 \le
 \frac{1-\kappa}{n+2}\, \mu(B)\quad\text{and}\quad \mu(E_j(B;\delta_0))\le \frac{1-\kappa}{n+2}\, \mu(B).
$$
It is also straightforward to see that $\varepsilon_0$ and $\delta_0$ can be chosen to depend on $n$, $p$, and $\kappa$ only.

Furthermore, by \eqref{G_B}, we get that
\begin{equation}\label{equation:meausre of G_B}
\begin{aligned}
 \mu(G_B)
 &\ge
 \mu(B) - \sum_{j=0}^n \mu(E_j(B;\delta_0))
 -\mu(E(B;\varepsilon_0)) \\
 &\ge
 \mu(B) - (n+2)\frac{1-\kappa}{n+2}\mu(B)
 =
 \kappa\mu(B).
\end{aligned}
\end{equation}
This verifies \eqref{muB_G} and thus completes the proof of Lemma~\ref{lem:aux_lemma}.}

\section{Finding close roots}
In this section we will establish how close to $x$ the roots of a polynomial satisfying system \eqref{eq:aux_lemma_statement} are. The parameters $\xi_i$ will be suitably chosen. We will use Hensel's Lemma, which can be found, for example, in \cite{Hensel's_Lemma}, to identify a suitable root $\alpha\in\Q_p$ of $P$ close to $x$.

\begin{lem}[Hensel's Lemma]
Let $f \in \Z_p[x]$, $x \in \Z_p$ and $|f(x)|_p < |f'(x)|^2_p$.
Then there exists a unique $\alpha \in \Z_p$ such that $f (\alpha) = 0$, $|f'(\alpha)|_p = |f'(x)|_p$, and
$$|x - \alpha|_p = |f(x)|_p\cdot|f'(x)|_p^{-1}<|f'(x)|_p.$$
\end{lem}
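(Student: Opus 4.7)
The plan is to prove this via the standard $p$-adic Newton iteration. Define $x_0 := x$ and recursively set
$$x_{k+1} := x_k - \frac{f(x_k)}{f'(x_k)},$$
and show that $(x_k)$ converges in $\Z_p$ to the desired $\alpha$.

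The core of the argument is to maintain, by induction on $k$, two invariants: (i) $|f'(x_k)|_p = |f'(x)|_p$, and (ii) $|f(x_k)|_p \le c_0^{2^k}|f'(x)|_p^2$, where $c_0 := |f(x)|_p|f'(x)|_p^{-2} < 1$. For (ii) at step $k+1$, I would Taylor expand $f$ around $x_k$; since $f \in \Z_p[x]$ and $x_k, x_{k+1} \in \Z_p$, each coefficient $f^{(j)}(x_k)/j!$ lies in $\Z_p$, so after cancellation of the degree $0$ and $1$ terms (which is precisely what the Newton update achieves) one obtains
$$|f(x_{k+1})|_p \;\le\; |x_{k+1}-x_k|_p^2 \;=\; \frac{|f(x_k)|_p^2}{|f'(x_k)|_p^2},$$
and invariant (i) at step $k$ turns this into (ii) at step $k+1$. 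For (i), the Taylor expansion of $f'$ yields $|f'(x_{k+1}) - f'(x_k)|_p \le |x_{k+1}-x_k|_p < |f'(x)|_p$, so the ultrametric inequality preserves the invariant.

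Since $|x_{k+1} - x_k|_p = |f(x_k)|_p / |f'(x)|_p$ tends to $0$ super-exponentially, the sequence $(x_k)$ is Cauchy in the complete space $\Z_p$ and converges to some $\alpha \in \Z_p$. Continuity of polynomial evaluation then gives $f(\alpha) = 0$ and $|f'(\alpha)|_p = |f'(x)|_p$. For the distance formula, note that $|x_1 - x|_p = |f(x)|_p/|f'(x)|_p$, whereas $|x_{k+1} - x_k|_p$ is strictly smaller for every $k \ge 1$ by the quadratic convergence; the ultrametric inequality therefore forces $|\alpha - x|_p = |x_1 - x|_p = |f(x)|_p |f'(x)|_p^{-1}$, and the strict bound $|f(x)|_p |f'(x)|_p^{-1} < |f'(x)|_p$ is immediate from the hypothesis $|f(x)|_p < |f'(x)|_p^2$.

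For uniqueness, if $\beta \in \Z_p$ is a second root with $|\beta - x|_p < |f'(x)|_p$, then $|\beta - \alpha|_p < |f'(x)|_p$, and Taylor expanding $0 = f(\beta) - f(\alpha)$ around $\alpha$ forces $|f'(\alpha)|_p \le |\beta - \alpha|_p < |f'(x)|_p$ unless $\beta = \alpha$, contradicting invariant (i). The only real subtlety throughout is ensuring every appeal to the ultrametric inequality is strict --- i.e., one side dominates the other --- which is exactly what the hypothesis $|f(x)|_p < |f'(x)|_p^2$ guarantees at every stage of the induction.
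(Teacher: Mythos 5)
The paper does not reprove this lemma; it is stated and attributed to the cited formalisation by Lewis, so there is no in-paper argument to compare against. Your proof is the standard $p$-adic Newton iteration and is correct: the two invariants are set up properly, the quadratic contraction $|f(x_{k+1})|_p \le |f(x_k)|_p^2/|f'(x_k)|_p^2$ follows from the vanishing of the linear Taylor term and from $\frac{1}{j!}f^{(j)}(x_k)\in\Z_p$, Cauchyness and passage to the limit are handled appropriately, the distance formula falls out of the isoceles ultrametric inequality once $|x_{k+1}-x_k|_p<|x_1-x|_p$ for $k\ge1$, and the uniqueness argument via Taylor expansion of $f(\beta)-f(\alpha)$ at $\alpha$ is sound. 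The only thing worth stating explicitly (you use it implicitly) is the induction that keeps $x_k\in\Z_p$ at every step, which follows because $|x_{k+1}-x_k|_p\le|f(x_k)|_p/|f'(x)|_p<1$ under your invariants; without that the appeal to $\frac{1}{j!}f^{(j)}(x_k)\in\Z_p$ and the conclusion $\alpha\in\Z_p$ would be unjustified.
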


Now we specialise Hensel's Lemma to the setup of Lemma~\ref{lem:aux_lemma}.

\begin{corr}\label{lem:bounds_on_first_two_xis}
Let $n\ge2$, $0<\delta_0<1$, $Q>1$ and $\xi_0,\dots,\xi_n>0$. Suppose that
\begin{equation}\label{eq:Condition on b_1 and b_2}
 \xi_0 < (\delta_0\xi_1)^2\,.
\end{equation}
Let $x\in\Z_p$. Then, for any $P \in \mathcal{P}_n(Q)$ satisfying \eqref{eq:aux_lemma_statement}
there exists a unique root $\alpha\in\Z_p$ of $P$ such that
\begin{equation}\label{eqn5.2}
 \begin{aligned}
 |x-\alpha|_p \le
 \delta_0^{-1}\xi_0\xi_1^{-1}.
 \end{aligned}
\end{equation}
\end{corr}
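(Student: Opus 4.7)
The plan is to apply Hensel's Lemma to $P$ at the point $x$ and read off the claimed distance bound directly from its conclusion. The two ingredients of Hensel's Lemma that must be checked are the $p$-adic sizes of $P(x)$ and $P'(x)$, both of which are controlled by the hypotheses of Lemma~\ref{lem:aux_lemma} through \eqref{eq:aux_lemma_statement}.

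First, from \eqref{eq:aux_lemma_statement} with $i=0$ and $i=1$ one has
\[
|P(x)|_p \le \xi_0 \qquad\text{and}\qquad |P'(x)|_p \ge \delta_0\xi_1 .
\]
Combining these with the standing assumption \eqref{eq:Condition on b_1 and b_2}, namely $\xi_0<(\delta_0\xi_1)^2$, gives
\[
|P(x)|_p \le \xi_0 < (\delta_0\xi_1)^2 \le |P'(x)|_p^2,
\]
which is precisely the hypothesis of Hensel's Lemma. Therefore there is a unique $\alpha\in\Z_p$ with $P(\alpha)=0$ and
\[
|x-\alpha|_p \;=\; \frac{|P(x)|_p}{|P'(x)|_p} \;\le\; \frac{\xi_0}{\delta_0\xi_1} \;=\; \delta_0^{-1}\xi_0\xi_1^{-1},
\]
which is exactly \eqref{eqn5.2}.

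There is essentially no obstacle here: the only thing to watch out for is the direction of the inequality in Hensel's hypothesis (strict $<$), which is why \eqref{eq:Condition on b_1 and b_2} is imposed as a strict inequality. Uniqueness of $\alpha$ in $\Z_p$ comes for free from the statement of Hensel's Lemma quoted above.
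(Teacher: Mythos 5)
Your proof is correct and takes exactly the same route as the paper: verify the Hensel hypothesis $|P(x)|_p < |P'(x)|_p^2$ from the bounds \eqref{eq:aux_lemma_statement} for $i=0,1$ together with \eqref{eq:Condition on b_1 and b_2}, then read off the distance and uniqueness directly from Hensel's Lemma. You merely spell out the intermediate inequalities that the paper treats as immediate.
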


\begin{proof}
With $f=P$, \eqref{eq:aux_lemma_statement} and
\eqref{eq:Condition on b_1 and b_2} verify the condition $|f(x)|_p < |f'(x)|^2_p$ in Hensel's Lemma, and therefore \eqref{eqn5.2} follows immediately.
\end{proof}

\begin{lem} \label{lem:with_ordered_roots}
Let $x\in \Z_p$ and $P\in \Z_p[x]$ be a polynomial of degree $n\ge2$, with the leading coefficient $a_n$ and roots $\alpha_1, \dots, \alpha_n\in\overline{\Q_p}$ ordered so that
\begin{equation}\label{eq:ordered_roots}
 |x-\alpha_1|_p
 \le
 |x-\alpha_2|_p
 \le
 \cdots
 \le
 |x-\alpha_n|_p.
 \end{equation}
Then, for any $0 \le j<n$, the following bound holds
\begin{equation}\label{eq:derivartive_bounded_by_roots}
 \left|\tfrac{1}{j!}P^{(j)}(x)\right|_p \le |a_n|_p |x-\alpha_{j+1}|_p \cdots |x-\alpha_n|_p.
 \end{equation}
Furthermore, {\colorblue{}for $1 \le j<n$,} if $|x-\alpha_{j}|_p < |x-\alpha_{j+1}|_p$ then we have equality in \eqref{eq:derivartive_bounded_by_roots}.
\end{lem}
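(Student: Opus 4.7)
The plan is to obtain the identity
\[
\frac{1}{j!}P^{(j)}(x)=a_n\, e_{n-j}(x-\alpha_1,\dots,x-\alpha_n),
\]
where $e_k$ denotes the $k$-th elementary symmetric polynomial, and then deduce both the inequality and the equality case directly from the non-Archimedean (ultrametric) nature of $|\cdot|_p$ extended to $\overline{\Q_p}$.

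First I would factor $P(y)=a_n\prod_{i=1}^n(y-\alpha_i)$ and write each factor as
$(y-\alpha_i)=(y-x)+(x-\alpha_i)$. Expanding the product in powers of $(y-x)$ yields
\[
P(y)=a_n\sum_{k=0}^{n}(y-x)^{k}\,e_{n-k}(x-\alpha_1,\dots,x-\alpha_n).
\]
Comparing with the Taylor expansion $P(y)=\sum_{k=0}^{n}\frac{1}{k!}P^{(k)}(x)(y-x)^{k}$ immediately gives the identity displayed above with $k=j$. This is the main algebraic input; everything else is valuation-theoretic.

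Next, setting $\beta_i:=x-\alpha_i$ and applying the ultrametric inequality to the sum defining $e_{n-j}$ gives
\[
\Bigl|\tfrac{1}{j!}P^{(j)}(x)\Bigr|_p=|a_n|_p\,\bigl|e_{n-j}(\beta_1,\dots,\beta_n)\bigr|_p
\le |a_n|_p\,\max_{\substack{S\subset\{1,\dots,n\}\\ |S|=n-j}}\prod_{i\in S}|\beta_i|_p.
\]
Since the $|\beta_i|_p$ are ordered by \eqref{eq:ordered_roots}, the product of the $n-j$ largest factors is achieved by $S=\{j+1,\dots,n\}$, which yields the required bound \eqref{eq:derivartive_bounded_by_roots}.

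For the equality case I would argue as follows. Under the hypothesis $|x-\alpha_j|_p<|x-\alpha_{j+1}|_p$, any subset $S\neq\{j+1,\dots,n\}$ of size $n-j$ must contain at least one index $i_*\le j$ and omit at least one index $i^*\ge j+1$; the corresponding ratio satisfies
\[
\frac{|\beta_{i_*}|_p}{|\beta_{i^*}|_p}\le\frac{|\beta_j|_p}{|\beta_{j+1}|_p}<1,
\]
so $\prod_{i\in S}|\beta_i|_p$ is strictly smaller than $\prod_{i=j+1}^{n}|\beta_i|_p$. Thus the term of $e_{n-j}$ indexed by $\{j+1,\dots,n\}$ strictly dominates all others in $p$-adic absolute value. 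The strong (ultrametric) triangle inequality then gives equality rather than a mere bound, yielding the second assertion.

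The only subtle point is justifying that $|\cdot|_p$ extends unambiguously to $\overline{\Q_p}$ and remains non-Archimedean; this is standard, so I do not expect any serious obstacle. The proof is essentially a one-line computation once the identity $\frac{1}{j!}P^{(j)}(x)=a_n\,e_{n-j}(x-\alpha_1,\dots,x-\alpha_n)$ is written down.
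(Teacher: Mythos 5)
Your proof is correct and follows essentially the same approach as the paper: both arrive at the identity $\tfrac{1}{j!}P^{(j)}(x)=a_n\sum_{1\le i_1<\cdots<i_{n-j}\le n}(x-\alpha_{i_1})\cdots(x-\alpha_{i_{n-j}})$ (you via Taylor expansion of the shifted factored form, the paper via direct differentiation), then apply the ultrametric inequality, and for the equality case both rely on the observation that the term indexed by $\{j+1,\dots,n\}$ strictly dominates every other term when $|x-\alpha_j|_p<|x-\alpha_{j+1}|_p$. The differences are cosmetic rather than structural.
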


\begin{proof}
Write the polynomial $P$ as the product
$P(X)=a_n(X-\alpha_1)\cdots(X-\alpha_n)$.
Then on differentiating this expression we obtain that
\begin{equation} \label{eq:Sum_of_Roots}
\tfrac1{j!}P^{(j)}(x) = a_n\sum_{1\le i_1 < \cdots <i_{n-j} \le n}
 (x-\alpha_{i_1}) \cdots (x-\alpha_{i_{n-j}}).
 \end{equation}
 Define $T_{j+1}=(x-\alpha_{j+1}) \cdots (x-\alpha_{n})$. By \eqref{eq:ordered_roots}, $T_{j+1}$ has the largest $p$-adic value in the sum of \eqref{eq:Sum_of_Roots}.
 We will also define $\widehat{T}_{j+1}$ to be the term with the second largest $p$-adic value in the sum.
 The $p$-adic value of each term in the sum in \eqref{eq:Sum_of_Roots} is less than or equal to $|T_{j+1}|_p$.
 Hence by the ultrametric property it must be that
\begin{equation}\label{eq:derivartive_bounded_by_roots(Short)}
 \left|\tfrac1{j!}P^{(j)}(x)\right|_p \le |a_n|_p|T_{j+1}|_p\,,
 \end{equation}
which is exactly \eqref{eq:derivartive_bounded_by_roots}. Next, we can rewrite equation \eqref{eq:Sum_of_Roots} as
\begin{equation}\label{eq:Sum_of_Roots_rewritten}
\tfrac1{j!}P^{(j)}(x) = a_n\left(\sum_{1\le i_1 < \cdots <i_{n-j} \le n}\;
 \prod_{\ell=1}^{n-j}(x-\alpha_{i_\ell}) - T_{j+1} + T_{j+1}\right).
 \end{equation}
By the ultrametric property again, we must have that
 \begin{equation} \label{eq:bound_on_largest_p_adic_term}
 \left|\sum_{1\le i_1 < \cdots <i_{n-j} \le n}\;
 \prod_{\ell=1}^{n-j}(x-\alpha_{i_\ell}) - T_{j+1}\right|_p \le \left|\widehat{T}_{j+1}\right|_p,
 \end{equation}
 as by taking away the largest term we must be left with the second largest term.
 Observe that $|x-\alpha_{j}|_p < |x-\alpha_{j+1}|_p$ implies that $|\widehat T_{j+1}|_p<|T_{j+1}|_p$, and therefore by, \eqref{eq:Sum_of_Roots_rewritten}, \eqref{eq:bound_on_largest_p_adic_term} and the ultrametric property, we obtain that $|\tfrac1{j!}P^{(j)}(x)|_p=|a_n|_p|T_{j+1}|_p$. This means exactly the equality in \eqref{eq:derivartive_bounded_by_roots}.
\end{proof}

\begin{lem}\label{lem:bounds_on_all_roots}
Let $x\in \Z_p$ and $Q>1$.
Let $P \in \mathcal{P}_n(Q)$ be such that inequalities \eqref{eq:aux_lemma_statement} hold with $\xi_i= Q^{-\theta_i}$ for some $\theta_i$, where $0 \le i \le n$. Let $\alpha_1, \dots, \alpha_n\in\overline{\Q_p}$ be the roots of $P$ ordered as in Lemma~\ref{lem:with_ordered_roots}.
Define
\begin{equation}\label{d_j_values_in_terms_of_b}
d_j=\theta_{j-1}-\theta_j
\end{equation}
for $1 \le j \le n$ and suppose that
\begin{equation}\label{eq:d_i's}
 d_1 \ge d_2 \ge \cdots \ge d_n \ge 0\,.
 \end{equation}
Then the roots of $P$ satisfy the inequalities
\begin{equation}\label{eqn5.12}
 |x-\alpha_j|_p \le \delta_0^{-1} Q^{-d_j} \qquad (1 \le j \le n).
\end{equation}
\end{lem}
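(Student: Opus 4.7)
Starting from the identity
\[
|x-\alpha_j|_p\;=\;\frac{|a_n|_p\prod_{i\ge j}|x-\alpha_i|_p}{|a_n|_p\prod_{i>j}|x-\alpha_i|_p}\,,
\]
I would bound the numerator above by $\xi_{j-1}$ and the denominator below by $\delta_0\xi_j$, arriving at $|x-\alpha_j|_p\le\delta_0^{-1}\xi_{j-1}/\xi_j=\delta_0^{-1}Q^{-d_j}$. The denominator bound is direct: Lemma~\ref{lem:with_ordered_roots} at index $j$ combined with \eqref{eq:aux_lemma_statement} gives $|a_n|_p\prod_{i>j}|x-\alpha_i|_p\ge|\tfrac{1}{j!}P^{(j)}(x)|_p\ge\delta_0\xi_j$. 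The numerator bound would follow from the \emph{equality} clause of Lemma~\ref{lem:with_ordered_roots} at index $j-1$ combined with $|\tfrac{1}{(j-1)!}P^{(j-1)}(x)|_p\le\xi_{j-1}$. The snag is that this equality requires the strict inequality $|x-\alpha_{j-1}|_p<|x-\alpha_j|_p$, which can fail at ties.

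\textbf{Clusters.} To deal with ties I would group indices: let $0=j_0<j_1<\cdots<j_r=n$ be the indices at which a strict jump occurs in the chain $|x-\alpha_1|_p\le\cdots\le|x-\alpha_n|_p$, so that the distances are constant on each cluster $(j_{s-1},j_s]$. The equality in Lemma~\ref{lem:with_ordered_roots} does hold at every $j_{s-1}$: for $s\ge 1$ by the strict-jump hypothesis, and for $j_0=0$ trivially, since $P(x)=a_n\prod_i(x-\alpha_i)$ yields $|P(x)|_p=|a_n|_p\prod_i|x-\alpha_i|_p$ identically.

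\textbf{Main step.} Inside each cluster I would prove \eqref{eqn5.12} only at the left endpoint $j^\star:=j_{s-1}+1$, where the ratio argument goes through. The equality at $j_{s-1}$ with the upper bound in \eqref{eq:aux_lemma_statement} gives $|a_n|_p\prod_{i\ge j^\star}|x-\alpha_i|_p\le\xi_{j^\star-1}$, and Lemma~\ref{lem:with_ordered_roots} at $j^\star$ with the lower bound (or, when $j^\star=n$, the identity $|\tfrac{1}{n!}P^{(n)}(x)|_p=|a_n|_p\ge\delta_0$) gives $|a_n|_p\prod_{i>j^\star}|x-\alpha_i|_p\ge\delta_0\xi_{j^\star}$. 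Dividing these yields $|x-\alpha_{j^\star}|_p\le\delta_0^{-1}\xi_{j^\star-1}/\xi_{j^\star}=\delta_0^{-1}Q^{-d_{j^\star}}$.

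\textbf{Extending across each cluster and main obstacle.} For any other $j\in(j^\star,j_s]$, $|x-\alpha_j|_p=|x-\alpha_{j^\star}|_p\le\delta_0^{-1}Q^{-d_{j^\star}}$; and since \eqref{eq:d_i's} makes $(d_j)$ non-increasing, $d_j\le d_{j^\star}$, hence $\delta_0^{-1}Q^{-d_{j^\star}}\le\delta_0^{-1}Q^{-d_j}$, completing \eqref{eqn5.12} for every $j$ in the cluster. Running over all clusters covers all $1\le j\le n$. I expect the main obstacle to be precisely this cluster phenomenon: the ratio argument closes only at the left endpoints of clusters, and the monotonicity assumption \eqref{eq:d_i's} is exactly what one needs to push the bound from there into the cluster interior.
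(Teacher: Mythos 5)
Your proposal is correct and is essentially the paper's proof in cluster form: the paper runs an induction on $j$, applying the ratio argument (dividing $|\tfrac{1}{(j-1)!}P^{(j-1)}(x)|_p$ by $|\tfrac{1}{j!}P^{(j)}(x)|_p$) precisely when $|x-\alpha_{j-1}|_p<|x-\alpha_j|_p$ and otherwise falling back on $d_j\le d_{j-1}$, which is exactly your distinction between cluster left-endpoints and interior points. The only slip is cosmetic: in your $j^\star=n$ parenthetical you want $|a_n|_p\ge\delta_0\xi_n$ rather than $|a_n|_p\ge\delta_0$ (they coincide since the applications have $\xi_n=1$, but Lemma~\ref{lem:bounds_on_all_roots} itself does not assume that).
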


\begin{proof}
We will prove \eqref{eqn5.12} by induction on $j$. First consider $j=1$. Then, using \eqref{eq:derivartive_bounded_by_roots}, we obtain that
\begin{equation}
 \left|P'(x)\right|_p \le |a_n|_p |x-\alpha_{2}|_p \cdots |x-\alpha_n|_p = \frac{|P(x)|_p}{|x-\alpha_{1}|_p}.
\end{equation}
By rearranging and using the bounds from equation \eqref{eq:aux_lemma_statement} we obtain that
\begin{equation}
 |x-\alpha_{1}|_p \le \frac{|P(x)|_p}{|P'(x)|_p}
 \le \frac{Q^{-\theta_0}}{\delta_0Q^{-\theta_1}}
 = \delta_0^{-1}Q^{-d_1}
\end{equation}
as required in \eqref{eqn5.12} for $j=1$.

Now suppose that $1\le j<n$ and \eqref{eqn5.12} holds for this $j$. We shall prove \eqref{eqn5.12} for $j+1$.
Define $T_{j+1}=(x-\alpha_{j+1})\cdots (x-\alpha_{n})$ and $T_{j+2}=(x-\alpha_{j+2})\cdots (x-\alpha_{n})$, as in Lemma \ref{lem:with_ordered_roots}, where $T_{j+2}=1$ if $j=n-1$.
By Lemma \ref{lem:with_ordered_roots}, we get that
\begin{equation}\label{eq:derivartive_bounded_by_roots(Adjusted)}
\begin{aligned}
 \left|\tfrac{1}{(j+1)!} P^{(j+1)}(x)\right|_p \cdot |x-\alpha_{j+1}|_p
 &\le |a_n|_p |T_{j+2}|_p |x-\alpha_{j+1}|_p
 = |a_n|_p |T_{j+1}|_p\,,
\end{aligned}
\end{equation}
and so
\begin{equation}\label{eqn5.16}
\begin{aligned}
 |x-\alpha_{j+1}|_p
 \le \frac{|a_n|_p |T_{j+1}|_p}{\left|\frac{1}{(j+1)!} P^{(j+1)}(x)\right|_p}\,.
\end{aligned}
\end{equation}
If additionally, we assume that $|x-\alpha_{j}|_p < |x-\alpha_{j+1}|_p$ then, by Lemma \ref{lem:with_ordered_roots}, we obtain that $|a_n|_p|T_{j+1}|_p=\left|\tfrac{1}{j!}P^{(j)}(x)\right|_p$ and we obtain from \eqref{eqn5.16} and \eqref{eq:aux_lemma_statement}
that
\begin{equation}
 \begin{aligned}
 |x-\alpha_{j+1}|_p
 &\le \frac{\left|\frac{1}{j!} P^{(j)}(x)\right|_p}{\left|\frac{1}{(j+1)!} P^{(j+1)}(x)\right|_p}
 \le \frac{Q^{-\theta_j}}{\delta_0 Q^{-\theta_{j+1}}}
 = \delta_0^{-1} Q^{-d_{j+1}}.
 \end{aligned}
\end{equation}
If $|x-\alpha_{j}|_p < |x-\alpha_{j+1}|_p$ does not hold, then, by \eqref{eq:ordered_roots}, we have that $|x-\alpha_{j}|_p = |x-\alpha_{j+1}|_p$. Using \eqref{eq:d_i's} and the induction assumption, we then get that
\begin{equation}
 \begin{aligned}
 |x-\alpha_{j+1}|_p
 =
 |x-\alpha_{j}|_p \le \delta_0^{-1}Q^{-d_j}
 \le \delta_0^{-1}Q^{-d_{j+1}}\,,
 \end{aligned}
\end{equation}
thereby proving the required statement for $j+1$ and finishing the proof.
\end{proof}

\section{Root separation: proof of Theorem \ref{Thm:Main2}}

Let $n \ge 2$, $p$ be a prime, $v=1$, $0<\kappa<1$ and $\delta_0$, $C_1$ and $C_2$ be the constants arising from Lemma~\ref{lem:aux_lemma}. Take any ball $B\subset\Z_p$ and let $Q>Q_0$, where $Q_0$ is again as in Lemma~\ref{lem:aux_lemma}.

Let $\theta$ satisfy equation \eqref{eq:Beta_1_bound}. Define $\xi_2=\dots=\xi_n = 1$,
$$
\xi_0=\left\{\begin{array}{cl}
 \delta_0Q^{-n-1+\theta} &\text{if }\theta>1\,, \\[1ex]
 Q^{-n-1+\theta} &\text{if }\theta\le1\,,
\end{array}\right.\quad\text{and}\quad
\xi_1=\left\{\begin{array}{cl}
 \delta_0^{-1}Q^{-\theta} &\text{if }\theta>1\,, \\[1ex]
 Q^{-\theta} &\text{if }\theta\le1\,.
\end{array}\right.
$$
Define $\theta_i$ by the equation $\xi_i=Q^{-\theta_i}$ for $0 \le i \le n$. Then, it is readily verified that
\begin{align*}
 2\le\frac{2}{3}(n+1) < \theta_0 \le n+1 \qquad\text{and}\qquad
 0 \le \theta_1 < \frac{n+1}{3}
\end{align*}
and that \eqref{eq:Condition on b_1 and b_2} holds for all sufficiently large $Q$.

Then, clearly \eqref{xiorder} and \eqref{xiorder+}${}_{v=1}$ hold and Lemma~\ref{lem:aux_lemma} is applicable, and we have a measurable set $G_B \subset B$ satisfying \eqref{muB_G}. Take any $x\in G_B$ and fix, by Lemma~\ref{lem:aux_lemma}, any primitive irreducible polynomials $P \in \Z[X]$ of degree $n$ and height $C_1Q\le H(P)\le C_2Q$ satisfying \eqref{eq:aux_lemma_statement}.

Let $\alpha_1, \dots \alpha_n\in\overline{\Q_p}$ be the roots of $P$ ordered as in equation \eqref{eq:ordered_roots}. It is readily seen that \eqref{eq:d_i's} holds.
Then by Lemma \ref{lem:bounds_on_all_roots} we have that
\begin{align}
 |x-\alpha_1|_p &\le \delta_0^{-1} Q^{-\theta_0 +\theta_1}\le\delta_0^{-1}Q^{-(n+1-2\theta)}\,,\label{eqn6.3} \\ \nonumber
 |x-\alpha_2|_p &\le \delta_0^{-1} Q^{-\theta_1}\le \delta_0^{-2}Q^{-\theta}\,.
\end{align}
By Corollary~\ref{lem:bounds_on_first_two_xis}, $\alpha_1$ must be the same as $\alpha$ arising from Corollary~\ref{lem:bounds_on_first_two_xis} and therefore $\alpha_1\in\Z_p$. By the ultrametric property $\alpha_1\in B$ provided that $Q$ is sufficiently large.
By \eqref{eq:Beta_1_bound} and the ultrametric property again
\begin{align}
 |\alpha_1 - \alpha_2|_p \le
 \max \{ |x- \alpha_1|_p, |x- \alpha_2|_p \}
 \le \delta_0^{-2} Q^{-\theta}.\label{eqn6.4}
\end{align}
This completes the proof of Theorem \ref{Thm:Main2}, with $C_0=\delta_0^{-2}$. Indeed, \eqref{eq:measure_of_roots_intersecting_B_is_greater_than_3/4_of_B}
follows from \eqref{eqn6.3} and \eqref{muB_G}, while \eqref{eqn6.4} together with the aforementioned properties of $P$ ensures that $\alpha=\alpha_1$ belongs to $\set$.

\section{Counting discriminants: proof of Theorem~\ref{Thm:Main}}

The proof follows the ideas of \cite{BBG16}. Let $n \ge 2$, $p$ be a prime, $v=1/n$, $\kappa=1/2$ and $\delta_0$, $C_1$ and $C_2$ be the constants arising from Lemma~\ref{lem:aux_lemma}. Take $B=\Z_p$ and let $Q>Q_0$, where $Q_0$ is again as in Lemma~\ref{lem:aux_lemma}.

Let $0\le \nu\le n-1$. Let $\theta_n=0$, $d_1,\dots,d_n$ satisfy \eqref{eq:d_i's} and let $\theta_{n-1},\dots,\theta_0$ be defined by \eqref{d_j_values_in_terms_of_b}. Clearly, we have that
\begin{equation}\label{eq:betas_structured}
 \theta_0 \ge \cdots \ge\theta_n = 0\,.
\end{equation}
We also set $\xi_i=Q^{-\theta_i}$ and require that $\theta_0+\dots+\theta_n=n+1$. By \eqref{eq:betas_structured}, we have that $\theta_0\ge1+1/n$. Hence \eqref{xiorder} and \eqref{xiorder+} with $v=1/n$ hold and Lemma~\ref{lem:aux_lemma} is applicable. Therefore, there is a measurable set $G_B \subset B$ satisfying \eqref{muB_G}, where $B=\Z_p$. Take any $x\in G_B$ and fix, by Lemma~\ref{lem:aux_lemma}, any primitive irreducible polynomial $P \in \Z[X]$ of degree $n$ and height $C_1Q\le H(P)\le C_2Q$ satisfying \eqref{eq:aux_lemma_statement}.

Let $\alpha_1, \dots \alpha_n\in\overline{\Q_p}$ be the roots of $P$ ordered such as in equation \eqref{eq:ordered_roots}.
Then by Lemma~\ref{lem:bounds_on_all_roots} and the ultrametric property we have that
\begin{equation}
 |\alpha_i - \alpha_j|_p \le
 \delta_0^{-1}Q^{-d_j}
\end{equation}
for any $1\le i<j\le n$.
It follows that
\begin{equation} \begin{aligned}\label{eq:det_1st}
 0 < |D(P)|_p \le |a_n|_p^{2n-2} \prod_{1 \le i < j \le n} Q^{-2d_j} \ll Q^{-2 \sum_{j=2}^n(j-1)d_j}.
 \end{aligned}
\end{equation}
Setting
\begin{equation}\label{eq:nu_sum}
 \nu = \sum_{j=1}^n (j-1)d_j
\end{equation}
gives that
$0 < |D(P)|_p \ll Q^{-2\nu}$.

Rearranging \eqref{d_j_values_in_terms_of_b} we get
$\theta_{j-1}= d_j + \theta_j$, and then we obtain that
$\theta_{j-1} = d_j + \cdots + d_n + \theta_n=d_j + \cdots + d_n $ since $\theta_n=0$. Hence,
\begin{equation}\label{eq:sum_of_jd_j's}
 \sum_{j=1}^njd_j = \sum_{j=1}^{n} (d_{j} + \cdots + d_n)= \sum_{j=1}^{n} \theta_{j-1}= \sum_{j=0}^{n-1
} \theta_{j}= n+1,
\end{equation}
where we have used the fact that $\theta_n=0$.
Now it is possible to compute $\nu$ by expanding the right hand-side of equation \eqref{eq:nu_sum}:
\begin{equation}\label{eqn7.6}
 \nu = n+1 - \sum_{j=1}^n d_j\,.
\end{equation}
By Lemmas~\ref{lem:aux_lemma} and \ref{lem:bounds_on_all_roots}, for every $x \in G_B$ there exists an irreducible polynomial $P\in\Z[X]$ of degree $n$ with one of its roots $\alpha=\alpha(P)$ satisfying
 \begin{equation}
 |x-\alpha(P)|_p \le \delta_0^{-1}Q^{-d_1}\,.
 \end{equation}
Hence,
 \begin{equation}\label{eq:G_J is a subset}
 G_B \subset \bigcup_{P \in \mathcal{D}_{n,p}(C_2Q,\nu)} \bigcup_{j=1}^n \left\{ x\in\Z_p:
 |x-\alpha_j(P)|_p \le \delta_0^{-1} Q^{-d_1}
 \right\}\,,
 \end{equation}
where $\alpha_1(P),\dots,\alpha_n(P)\in\overline{\Q_p}$ are the roots of $P$. Therefore, since $B=\Z_p$, we have that
 \begin{equation}
 \frac{1}{2}=\frac{1}{2} \mu(B) \le \#\mathcal{D}_{n,p}(C_2Q,\nu) \cdot n\delta_0^{-1} Q^{-d_1}
 \end{equation}
 and so by rearranging we get
\begin{equation}\label{eqn7.10}
 \#\mathcal{D}_{n,p}(C_2Q,\nu) \ge \frac{\delta_0}{2n} Q^{d_1}.
 \end{equation}
It can be further seen that the best possible lower bound is obtained by maximising the value of $d_1$, or by \eqref{eqn7.6}, minimizing $d_2,\dots,d_n$. By \eqref{eq:d_i's}, this can be done by letting $d_2=d_3 = \cdots = d_n$, and, by solving \eqref{eq:sum_of_jd_j's} and \eqref{eqn7.6}, we obtain that
 \begin{equation}
 d_1=n+1-\frac{n+2}{n}\nu \qquad \text{ and } \qquad d_2= \frac{2\nu}{n(n-1)}.
 \end{equation}
It is readily seen that $d_1\ge d_2$ for $0\le \nu \le n-1$. Substituting $d_1$ into \eqref{eqn7.10} and rescaling the bound for the height by letting $\tilde{Q}=C_2Q$ we complete the proof.

\medskip

\noindent\textit{Acknowledgements.} VB was supported by the EPSRC grant EP/Y016769/1. The authors are grateful to the anonymous reviewer for their helpful comments, which enabled us to improve the presentation and accuracy of our arguments.

\bigskip

{\small
\noindent \begin{minipage}{0.5\textwidth}Victor Beresnevich:\\ 
Department of Mathematics\\ 
University of York\\ 
Heslington, York, YO10 5DD\\
{\tt victor.beresnevich@york.ac.uk}
\end{minipage}
\qquad \begin{minipage}{0.5\textwidth}Bethany Dixon:\\ 
Department of Mathematics\\
University of York\\
Heslington, York, YO10 5DD
\end{minipage}
}

\end{document}